\documentclass[pdflatex,sn-mathphys-num]{sn-jnl}

\usepackage{graphicx,multirow,amsmath,amssymb,amsfonts,mathtools,bm}
\usepackage{amsthm,booktabs,array,float,placeins,enumitem}
\usepackage{algorithm,algorithmicx,algpseudocode}
\graphicspath{{figures/}}
\setlist{nosep}
\theoremstyle{thmstyleone}
\newtheorem{theorem}{Theorem}[section]
\newtheorem{proposition}[theorem]{Proposition}
\theoremstyle{thmstyletwo}
\newtheorem{remark}[theorem]{Remark}
\newtheorem{assumption}[theorem]{Assumption}

\newcommand{\C}{\mathbb C}
\newcommand{\R}{\mathbb R}
\newcommand{\Th}{\mathcal T_h}

\newcommand{\Ei}{\mathcal E_h^{\rm I}}

\newcommand{\jump}[1]{\left[\!\left[#1\right]\!\right]}
\newcommand{\jumpn}[1]{\left[\!\left[#1\right]\!\right]_{n}}
\newcommand{\sjump}[1]{[#1]}
\newcommand{\avg}[1]{\left\{\!\left\{#1\right\}\!\right\}}

\newcommand{\GR}{\mathrm{GR}}

\hypersetup{
  pdftitle={Direction-Adaptive Plane-Wave Discontinuous Galerkin Methods for the Helmholtz Equation},
  pdfauthor={Shelvean Kapita},
  pdfsubject={Plane-wave discontinuous Galerkin methods for the Helmholtz equation},
  pdfkeywords={Helmholtz equation, PWDG, Trefftz method, direction adaptivity, variable projection}
}
\begin{document}
\title[Direction-Adaptive PWDG for Helmholtz]{Direction-Adaptive Plane-Wave Discontinuous Galerkin Methods for the Helmholtz Equation}
\author*[1]{\fnm{Shelvean} \sur{Kapita}}
\affil*[1]{\orgdiv{Department of Mathematics}, \orgname{Texas A\&M University}, \orgaddress{\city{College Station}, \state{Texas}, \country{USA}}}
\abstract{We consider plane-wave discontinuous Galerkin (PWDG) approximations of the Helmholtz equation in which the local propagation directions are allowed to change.  The directions are determined by minimizing a weighted residual on the mesh skeleton.  We consider two formulations.  In Part~A the PWDG equations are retained and, for each set of directions, the plane-wave coefficients are obtained from the Galerkin system.  In Part~B the same skeleton residual is minimized over both coefficients and directions, with the coefficient variables eliminated by variable projection.  Complex angles are used so that propagating and evanescent waves are included in the same local Trefftz family.  We also use the Trefftz-DG norm to normalize the discrete system and a local Cauchy-trace Gramian to remove numerically dependent directions.  On straight edges the plane-wave products in the residual can be integrated exactly.  For an all-Dirichlet problem the residual is the squared DG error, and its local contributions can also be used for adaptivity.  We prove a local quadratic-growth result for the reduced direction problem in a fixed-rank neighborhood of an identifiable zero-residual solution.  The numerical experiments first show a limitation: on an exact circular DtN boundary the dominant phase direction of a Hankel wave is recovered accurately, but a small plane-wave fan does not give a comparably small field error.  We then show that the high-$p$ DtN floor is affected separately by the coefficient basis, the trace cutoff and the arithmetic used to compute the trace spectrum.  Finally, for a finite sum of plane waves, residual-based ENRICH--MOVE continuation recovers all directions to roundoff for $M=1,\ldots,19$ in the family considered.  At $M=20$ the automatic birth step enters a false basin, while a nearby birth at the same dimension again gives roundoff error.  These experiments show that direction adaptation is most effective when the solution has low directional complexity.}
\keywords{Helmholtz equation, plane-wave discontinuous Galerkin method, Trefftz method, direction adaptivity, variable projection}
\pacs[MSC Classification]{65N30, 65N50, 65N15}
\maketitle

\section{Introduction}\label{sec:intro}
The numerical solution of the Helmholtz equation becomes increasingly expensive as the wavenumber is increased.  One approach is to use oscillatory basis functions that already contain part of the behavior of the solution.  If the local basis functions satisfy the Helmholtz equation exactly, the resulting method is a Trefftz method.  Plane waves give a particularly simple Trefftz space and lead to numerical formulations involving only traces on the mesh skeleton.  The ultra-weak variational formulation of Cessenat and Despr\'es \cite{CessenatDespres1998} is an early example.  Plane-wave discontinuous Galerkin methods were subsequently analyzed in \cite{GittelsonHiptmairPerugia2009,HiptmairMoiolaPerugia2011,HiptmairMoiolaPerugia2016}; see also the survey \cite{HiptmairMoiolaPerugiaSurvey2016}.  Residual based mesh adaptivity for PWDG was considered in \cite{KapitaMonkWarburton2015}.

In the usual PWDG method, the number of plane waves and their directions are chosen when the discrete space is constructed.  Uniformly spaced directions are convenient and work well in many examples.  They can, however, be wasteful when the solution is dominated by only a few propagation directions.  Adding more uniformly spaced waves is also not without cost.  As the local dimension is increased, the traces of the plane waves become nearly dependent and the resulting algebraic problem can be severely ill-conditioned \cite{CongreveGedickePerugia2019}.  This suggests treating the directions as part of the approximation problem rather than fixing them beforehand.

This idea has been considered in several forms.  Amara et al. \cite{AmaraChaudhryDiazDjellouliFiedler2014} rotate local plane-wave bases in a least-squares method and determine the rotations from a nonlinear minimization problem.  Agrawal and Hoppe \cite{AgrawalHoppe2017} optimize real PWDG directions while keeping the PWDG variational equation as a constraint.  Congreve, Houston and Perugia \cite{CongreveHoustonPerugia2019} use directional information together with local enrichment and mesh refinement in an $hp$-Trefftz DG method.  Fang et al. \cite{FangQianZepedaZhao2017} first compute a lower-frequency solution, extract dominant ray directions, and use those directions in a higher-frequency finite element space.  Other methods using approximate propagation information are described in \cite{LamQian2019,HuWang2021}.

Our aim is to study a PWDG calculation in which the directions are determined from the same residual that is later used to judge the quality of the approximation.  The residual contains the jumps of the field and normal flux and the mismatch in the boundary conditions.  We allow the angle of a plane wave to be complex.  Hence an ordinary propagating wave and an evanescent wave are obtained from the same formula.  No separate evanescent basis or critical-angle test is required.

We use two related formulations.  In the first, which we call Part~A, the PWDG equations are kept.  For a given set of directions we solve the PWDG problem, substitute the computed field into the skeleton residual, and then vary only the directions.  Thus the coefficients satisfy the Galerkin equations at every nonlinear iterate.  This is closest to the formulation of Agrawal and Hoppe \cite{AgrawalHoppe2017}.  In Part~B we remove the Galerkin constraint and minimize the skeleton residual over the broken Trefftz field itself.  With the directions fixed, this is a linear least-squares problem of the type studied by Monk and Wang \cite{MonkWang1999}.  We then eliminate the linear coefficient variables by variable projection \cite{GolubPereyra1973}.

There are two algebraic issues that arise immediately in such a calculation.  The first is conditioning.  The Euclidean condition number of the PWDG matrix depends strongly on the scaling of the plane-wave coefficients and does not by itself describe the discrete operator in its natural norm.  We therefore use the standard coercive Trefftz-DG norm as a Riesz metric.  This gives a graph--Riesz condition number for the normalized operator.  The second issue is numerical rank.  If several local directions are close, some combinations of their Cauchy traces contain essentially no new information.  We detect these combinations with a local trace Gram matrix and remove them before the global solve.  Related conditioning and redundancy issues for Trefftz spaces are discussed in \cite{BarucqBendaliDiazTordeux2021,BetckeTrefethen2005,BarnettBetcke2008,ParolinHuybrechsMoiola2023}.

For straight-sided elements, the edge integrals needed by the residual can be evaluated directly.  Products of two plane waves reduce to exponential moments, and the same is true after differentiation with respect to a direction parameter.  Therefore the least-squares matrix can be assembled without sampling the homogeneous edge residuals.  There is a further simplification in the all-Dirichlet case.  If $u$ is the exact solution and $v_h$ is any broken Trefftz field, then
\[
 \mathcal J(v_h)=\mathcal G_h(u-v_h,u-v_h)=\|u-v_h\|_{\mathcal G,h}^2.
\]
Thus the quantity minimized by the direction search is also the exact squared DG/graph error.  Its edge contributions give local error quantities without introducing a second estimator.  For mixed Dirichlet--impedance conditions this exact identity is lost.  In that case we obtain an $L^2$ bound from a Helmholtz stability estimate stated later in the paper.

The numerical experiments are arranged to show first where direction movement helps and then where it does not.  We first use three hidden plane waves to test exact recovery, and a transmission problem to check that the same complex-angle parameterization recovers propagating and evanescent waves.  We next return to the exact circular DtN geometry of \cite{KapitaMonk2018}.  For a Hankel field the learned directions follow the local phase very well, but the field error remains limited by curvature and amplitude variation.  We then increase $p$ on the same geometry and investigate separately the effects of the raw coefficient basis, the trace cutoff and arithmetic precision.  Our main recovery experiment is considered after these tests.  The exact field is a finite sum of $M$ plane waves, and $M$ is increased one direction at a time.  For the family used here the residual-based ENRICH--MOVE iteration gives roundoff error for $M=1,\ldots,19$.  At $M=20$ the automatic birth step enters a false basin, while a nearby birth at the same dimension again converges to roundoff.  Thus the important distinction in the experiments is between a field with a small number of identifiable directions and a field with general angular content.  For the more nonconvex examples we use continuation, initializing a more difficult problem with directions obtained from the preceding one \cite{Watson1989}.

The paper is organized as follows.  In Section~\ref{sec:pwdg} we give the PWDG formulation and introduce the complex-angle Trefftz space.  Section~\ref{sec:graph} describes the graph--Riesz normalization and local trace-rank reduction.  In Section~\ref{sec:nonlinear} we define the skeleton residual and the two nonlinear formulations, and we derive the local fixed-rank result and the direction derivatives.  The numerical algorithms are summarized in Section~\ref{sec:algorithm}.  Section~\ref{sec:numerics} contains the numerical experiments.  We end with conclusions in Section~\ref{sec:conclusion}.

\section{PWDG with Variable Complex Directions}\label{sec:pwdg}

\subsection{Model problem}
We begin with a standard truncated acoustic scattering problem.  Let $\Omega\subset\mathbb R^2$ be a bounded Lipschitz domain with boundary
\[
 \partial\Omega=\Gamma_D\,\dot\cup\,\Gamma_R.
\]
Here $\Gamma_D$ denotes the boundary on which Dirichlet data are prescribed and $\Gamma_R$ is an artificial exterior boundary, as in the usual truncated scattering problem \cite{CessenatDespres1998,HiptmairMoiolaPerugiaSurvey2016}.  If $n$ is the outward unit normal to $\Omega$, we consider
\begin{equation}\label{eq:model}
 \begin{aligned}
  &\Delta u+\kappa^2u=0 \qquad \text{in }\Omega,\\
  &u=g \qquad \text{on }\Gamma_D,\\
  &\partial_nu+i\kappa u=0 \qquad \text{on }\Gamma_R,
 \end{aligned}
\end{equation}
where $\kappa>0$ is the wavenumber.  We use the time dependence $e^{+i\omega t}$.  With this convention an outgoing radial wave behaves like $e^{-i\kappa r}$, so the first-order absorbing condition is $\partial_nu+i\kappa u=0$.  Under the alternative convention $e^{-i\omega t}$ the outgoing factor is $e^{+i\kappa r}$ and the corresponding impedance sign is reversed.  The PWDG fluxes below use the $e^{+i\omega t}$ convention consistently.

Two special cases will be used in the numerical experiments.  In the first, $\Gamma_R=\varnothing$ and Dirichlet data are prescribed on the whole boundary of a square.  In the second, the wavenumber is piecewise constant and the field and normal flux are continuous across a material interface \cite{MoiolaSpence2019}.  The PWDG formulation below is written first for \eqref{eq:model}; the changes needed for these two examples are stated when they are used.

For an interior edge $e=\partial K^+\cap\partial K^-$ with outward normals $n^\pm$, we use the standard two-sided DG averages and jumps \cite{KapitaMonkWarburton2015,HiptmairMoiolaPerugia2011,HiptmairMoiolaPerugiaSurvey2016}.  For a scalar trace $v$,
\begin{equation}\label{eq:scalar-jump}
 \avg{v}=\frac12(v^++v^-),
 \qquad
 \jump{v}=v^+n^+ + v^-n^- ,
\end{equation}
so that $\jump{v}$ is vector-valued.  For a vector trace $\boldsymbol\tau$,
\begin{equation}\label{eq:vector-jump}
 \avg{\boldsymbol\tau}=\frac12(\boldsymbol\tau^++\boldsymbol\tau^-),
 \qquad
 \jumpn{\boldsymbol\tau}=\boldsymbol\tau^+\!\cdot n^+ + \boldsymbol\tau^-\!\cdot n^- ,
\end{equation}
so that $\jumpn{\boldsymbol\tau}$ is scalar-valued.  We reserve the double-line symbols $\jump{\cdot}$ and $\jumpn{\cdot}$ for these DG jumps throughout the paper.  Ordinary single brackets $\sjump{\cdot}$ are used later for an oriented scalar difference on a fixed edge, where $\sjump{v}=v^+-v^-$ after choosing one normal $n_e$.  They are not DG jumps.  The Monk--Wang comparison uses this oriented scalar difference for the full-gradient jump.  The normals are real, hence conjugation commutes with the trace operators.

\paragraph{Edge-scale convention.}
For the constant-wavenumber problem we take $\xi_e=\kappa$.  The solution jump is then weighted by $\alpha\kappa$ and the normal-flux jump by $\beta/\kappa$, as in the usual Trefftz-DG scaling \cite{HiptmairMoiolaPerugia2011,HiptmairMoiolaPerugiaSurvey2016}.  In the transmission experiment the two neighboring elements have different wavenumbers, and we use the arithmetic interface scale
\begin{equation}\label{eq:xi-interface}
 \xi_e:=\frac{\kappa_{K^+}+\kappa_{K^-}}{2}
 \qquad (e=\partial K^+\cap\partial K^-),
\end{equation}
and retain the reciprocal pair $\alpha\xi_e$ and $\beta/\xi_e$.  This choice of average is made only for the numerical transmission problem.  We do not use it as a general variable-coefficient flux prescription.  The remaining flux parameters satisfy $\alpha,\beta>0$ and $0<\delta\le1/2$; admissible choices and their role in the Trefftz-DG norm are discussed in \cite{GittelsonHiptmairPerugia2009,HiptmairMoiolaPerugia2011}.  We take $\alpha=\beta=1/2$ in the all-Dirichlet computations.  The parameter $\delta$ is needed only on $\Gamma_R$ and determines the split of the Robin boundary terms.

\subsection{Complex-angle Trefftz basis functions}
We next describe the local Trefftz space.  Let $x_K$ be a reference point in the element $K$; in the computations we take $x_K$ to be the centroid.  For a complex angle $z=\theta+i\eta\in\C$, define
\begin{equation}\label{eq:qz}
 q_K(z)=\kappa_K(\cos z,\sin z)
\end{equation}
and
\begin{equation}\label{eq:phiz}
 \phi_{K,z}(x)=\exp\big(iq_K(z)\cdot(x-x_K)\big).
\end{equation}
The dot product in \eqref{eq:phiz} is bilinear, without complex conjugation.

\begin{proposition}[complete complex-angle Trefftz family]\label{prop:trefftz}
For every $z\in\C$,
\[
 \Delta\phi_{K,z}+\kappa_K^2\phi_{K,z}=0\qquad\text{in }K.
\]
In particular, $\eta=0$ gives an ordinary propagating plane wave, while $\eta\ne0$ gives an inhomogeneous or evanescent Trefftz wave \cite{ParolinHuybrechsMoiola2023}.  Conversely, every $q\in\C^2$ satisfying the complex Helmholtz dispersion relation $q\cdot q=\kappa_K^2$ can be written as $q=q_K(z)$ for some $z\in\C$.
\end{proposition}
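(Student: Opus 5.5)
The plan has three parts: a direct computation for the Helmholtz identity, a reading of the exponent for the propagating/evanescent statement, and complex trigonometry for the converse.

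\emph{Helmholtz identity.} Since $\phi_{K,z}(x)=\exp(iq\cdot(x-x_K))$ with $q=q_K(z)\in\C^2$ constant and the dot product bilinear, differentiation gives $\nabla\phi_{K,z}=iq\,\phi_{K,z}$. Applying this twice gives $\Delta\phi_{K,z}=-(q\cdot q)\,\phi_{K,z}$. So it suffices to check the dispersion relation $q_K(z)\cdot q_K(z)=\kappa_K^2(\cos^2z+\sin^2z)=\kappa_K^2$. This holds for all complex $z$, because $\cos^2z+\sin^2z=1$ is an entire identity: it holds on $\R$, so by the identity theorem it holds on $\C$.

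\emph{Propagating versus evanescent.} For $\eta=0$ the vector $q$ is real with $|q|=\kappa_K$, so $\phi_{K,z}$ is an ordinary plane wave. For $\eta\neq0$ I write $q=\Re q+i\Im q$. The standard formulas give
\[
\cos z=\cos\theta\cosh\eta-i\sin\theta\sinh\eta,\qquad \sin z=\sin\theta\cosh\eta+i\cos\theta\sinh\eta,
\]
so $\Im q=\kappa_K\sinh\eta\,(-\sin\theta,\cos\theta)\neq0$. Hence $|\phi_{K,z}|=\exp(-\Im q\cdot(x-x_K))$ decays exponentially in the direction $\Im q$, which is orthogonal to $\Re q=\kappa_K\cosh\eta(\cos\theta,\sin\theta)$. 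This is the evanescent wave described in the statement.

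\emph{Converse.} Given $q=(q_1,q_2)\in\C^2$ with $q_1^2+q_2^2=\kappa_K^2$, I set $a=q_1/\kappa_K$ and $b=q_2/\kappa_K$, so that $a^2+b^2=1$, and I seek $z$ with $\cos z=a$ and $\sin z=b$. Writing $w=e^{iz}$, the target is $a+ib=w$ and $a-ib=w^{-1}$. Now $(a+ib)(a-ib)=a^2+b^2=1$, so $a+ib\neq0$. Choose $z=-i\log(a+ib)$ for any branch of the logarithm. Then $w=a+ib$ and $w^{-1}=a-ib$, so $\cos z=(w+w^{-1})/2=a$ and $\sin z=(w-w^{-1})/(2i)=b$. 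This gives $q=q_K(z)$.

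The main point needing care is this converse step: one must check that $a+ib$ cannot vanish so that the logarithm exists. The factorization $a^2+b^2=(a+ib)(a-ib)=1$ settles this. The rest is routine calculus.
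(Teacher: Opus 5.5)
Your proposal is correct and follows the paper's proof essentially step for step: the Helmholtz identity from $\nabla\phi_{K,z}=iq_K(z)\phi_{K,z}$ and $\cos^2z+\sin^2z=1$, and the converse via $w=(q_1+iq_2)/\kappa_K$, $w^{-1}=(q_1-iq_2)/\kappa_K$, $z=-i\log w$. Your explicit computation of $\operatorname{Im}q_K$ for the propagating/evanescent claim matches what the paper records separately in \eqref{eq:imq}, and your justification of $w\neq0$ via $(a+ib)(a-ib)=1$ spells out a step the paper leaves implicit; neither changes the route.
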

\begin{proof}
The complex trigonometric identity $\cos^2z+\sin^2z=1$ gives $q_K(z)\cdot q_K(z)=\kappa_K^2$.  Since $\nabla\phi_{K,z}=iq_K(z)\phi_{K,z}$,
\[
 \Delta\phi_{K,z}=-(q_K(z)\cdot q_K(z))\phi_{K,z}=-\kappa_K^2\phi_{K,z}.
\]
For the converse, let $q=(q_1,q_2)\in\C^2$ satisfy $q_1^2+q_2^2=\kappa_K^2$ and set $w=(q_1+iq_2)/\kappa_K$.  Then $w\ne0$ and $w^{-1}=(q_1-iq_2)/\kappa_K$.  Choose any logarithm branch at the nonzero number $w$ and take $z=-i\log w$.  Since $e^{iz}=w$,
\[
 \cos z=\frac{w+w^{-1}}2=\frac{q_1}{\kappa_K},\qquad
 \sin z=\frac{w-w^{-1}}{2i}=\frac{q_2}{\kappa_K}.
\]
Thus $q=q_K(z)$.
\end{proof}

Given local direction parameters
\[
 Z_K=(z_{K,1},\ldots,z_{K,r_K}),
\]
define
\begin{equation}\label{eq:VKZ}
 V_K(Z_K)=\operatorname{span}\{\phi_{K,z_{K,1}},\ldots,\phi_{K,z_{K,r_K}}\},
 \qquad
 V_h(Z)=\prod_{K\in\Th}V_K(Z_K).
\end{equation}
In a standard $p$-PWDG calculation the local directions are prescribed when the approximation space is defined \cite{HiptmairMoiolaPerugia2011,HiptmairMoiolaPerugia2016}.  In the present method the collection $Z$ will be varied by the nonlinear algorithm.

For numerical purposes the complex angles are restricted to a bounded set.  Write
$z=\theta+i\eta$ and fix $\eta_{\max}>0$.  We define
\begin{equation}\label{eq:ZK-adm}
 \mathcal Z_K(\eta_{\max})
 :=\big(\R/(2\pi\mathbb Z)\big)\times[-\eta_{\max},\eta_{\max}],
\end{equation}
and, for the current local multiplicities $\bm r=(r_K)_{K\in\Th}$,
\begin{equation}\label{eq:Zh-adm}
 \mathcal Z_h(\bm r,\eta_{\max})
 :=\prod_{K\in\Th}\mathcal Z_K(\eta_{\max})^{r_K}.
\end{equation}
All reported complex-angle experiments use $\eta_{\max}=1.8$.  This is the actual bound imposed in the reference code.

The imaginary component has the useful physical interpretation
\begin{equation}\label{eq:imq}
 \operatorname{Im}q_K(\theta+i\eta)
 =\kappa_K\sinh\eta\,(-\sin\theta,\cos\theta),
 \qquad
 \|\operatorname{Im}q_K(\theta+i\eta)\|_2
 =\kappa_K|\sinh\eta|.
\end{equation}
It follows that $h_K\kappa_K|\sinh\eta|$ is a useful measure of the exponential variation of the basis function over $K$.  One could impose a bound on this quantity instead of a fixed bound on $|\eta|$, but we do not do so in the computations below.  We also note that a permutation of the $r_K$ angles does not change $V_K(Z_K)$.  The nonlinear objective therefore has the corresponding permutation symmetry, although the implementation simply keeps the directions as labeled variables.

\subsection{Skeleton formulation}
We now write the PWDG formulation for fixed directions $Z$.  We use the convention that $a_h(\cdot,\cdot)$ is linear in the first argument and conjugate-linear in the second.  The mixed Dirichlet--impedance form is
\begin{align}\label{eq:pwdgform}
 a_h(u,v)=&\sum_{e\in\Ei}\int_e
 \Big(
 \avg{u}\,\jumpn{\nabla_h\overline v}
 -\jump{\overline v}\cdot\avg{\nabla_hu}
 +i\frac{\beta}{\xi_e}\jumpn{\nabla_hu}\,\jumpn{\nabla_h\overline v}
 \Big)\,ds \nonumber\\
 &\quad+\sum_{e\in\Ei}\int_e i\alpha\xi_e\jump{u}\cdot\jump{\overline v}\,ds \\
 &+\int_{\Gamma_D}
 \Big(-\partial_nu\,\overline v+i\alpha\kappa u\,\overline v\Big)\,ds \nonumber\\
 &+\int_{\Gamma_R}\Big(
 (1-\delta)i\kappa u\,\overline v
 +(1-\delta)u\,\partial_n\overline v
 -\delta\,\partial_nu\,\overline v
 -\delta(i\kappa)^{-1}\partial_nu\,\partial_n\overline v
 \Big)\,ds.\nonumber
\end{align}
The corresponding right-hand side for the homogeneous impedance condition in \eqref{eq:model} is
\begin{equation}\label{eq:rhs}
 F_h(v)=\int_{\Gamma_D}
 g\Big(i\alpha\kappa\overline v-\partial_n\overline v\Big)\,ds.
\end{equation}
Equations \eqref{eq:pwdgform}--\eqref{eq:rhs} are the standard primal Trefftz-DG/PWDG flux formulation for a Dirichlet--Robin problem \cite{HiptmairMoiolaPerugia2011,HiptmairMoiolaPerugiaSurvey2016}.  When $\Gamma_R=\varnothing$ the Robin terms disappear, and this is the form used for the all-Dirichlet numerical examples.

For a complex wave vector $q$, if $v(x)=e^{iq\cdot(x-x_K)}$, then the conjugated test traces are
\[
 \nabla_h\overline v=-i\,\overline q\,\overline v,
 \qquad
 \partial_n\overline v=-i(\overline q\cdot n)\overline v.
\]
The fixed-direction problem is
\begin{equation}\label{eq:fixedPWDG}
 \text{find }u_h(Z)\in V_h(Z):\qquad
 a_h(u_h(Z),v_h)=F_h(v_h)\quad\forall v_h\in V_h(Z).
\end{equation}
Since every function in $V_h(Z)$ satisfies the Helmholtz equation on each element, the volume terms cancel after elementwise integration by parts and the resulting discrete problem involves only the mesh skeleton \cite{CessenatDespres1998,HiptmairMoiolaPerugiaSurvey2016}.

Fix an ordering of all active local Trefftz basis functions and write the resulting global broken Trefftz basis as
\[
 \Phi(Z)=\{\varphi_1(Z),\ldots,\varphi_{N_h}(Z)\}.
\]
With
\[
 u_h(Z)=\sum_{j=1}^{N_h}c_j(Z)\varphi_j(Z),
\]
the assembled matrix and load vector are
\begin{equation}\label{eq:matrix-def}
 A_{ij}(Z)=a_h\big(\varphi_j(Z),\varphi_i(Z)\big),
 \qquad
 f_i(Z)=F_h\big(\varphi_i(Z)\big).
\end{equation}
It follows that the fixed-direction PWDG problem can be written as
\begin{equation}\label{eq:matrix-system}
 A(Z)c(Z)=f(Z).
\end{equation}
For later comparison, we define the ordinary Euclidean condition number by
\begin{equation}\label{eq:kappaA}
 \kappa_2(A(Z))=\|A(Z)\|_2\,\|A(Z)^{-1}\|_2
\end{equation}
in the coefficient basis used to assemble $A$.  This number changes with the choice and scaling of the basis.  In the next section we introduce a second condition number after normalization by the DG/graph norm.

\section{Graph Norm, Normalization and Numerical Rank}\label{sec:graph}

\subsection{The DG/graph norm and graph--Riesz normalization}
We first introduce the norm that will be used to normalize the discrete operator.  With the same linear-first convention as for $a_h$, define
\begin{equation}\label{eq:graphform}
\begin{aligned}
 \mathcal G_h(u,v):={}&
 \sum_{e\in\Ei}\left[
 \alpha\xi_e\int_e \jump u\cdot\overline{\jump v}\,ds
 +\frac{\beta}{\xi_e}\int_e\jumpn{\nabla_hu}\,\overline{\jumpn{\nabla_hv}}\,ds
 \right]\\
 &+\alpha\kappa\int_{\Gamma_D}u\overline v\,ds
 +\int_{\Gamma_R}\left[(1-\delta)\kappa u\overline v
 +\frac{\delta}{\kappa}\partial_nu\,\overline{\partial_nv}\right]ds .
\end{aligned}
\end{equation}
For Trefftz functions, the standard PWDG Green identity gives \cite{GittelsonHiptmairPerugia2009,HiptmairMoiolaPerugia2011,HiptmairMoiolaPerugiaSurvey2016}
\begin{equation}\label{eq:coerciveidentity}
 \operatorname{Im}a_h(v,v)=\mathcal G_h(v,v).
\end{equation}
If the retained Trefftz traces are independent and the fixed-direction PWDG problem is uniquely solvable, $\mathcal G_h(v,v)=0$ implies $v=0$.  Hence $\|v\|_{\mathcal G,h}=\mathcal G_h(v,v)^{1/2}$ is a norm on the retained space and its Gram matrix is Hermitian positive definite.  When two or more local traces become nearly dependent, the smallest eigenvalues of this Gram matrix become very small.  We therefore remove numerically dependent local trace combinations before applying the Cholesky normalization described below.

\begin{remark}[Relation with the standard DG norm]\label{rem:graph-DG}
For constant $\kappa$, this is exactly the usual Trefftz-DG/PWDG coercive norm \cite{HiptmairMoiolaPerugia2011,HiptmairMoiolaPerugiaSurvey2016}:
\begin{align}\label{eq:DGnorm-identification}
 |||v|||_{\rm DG}^2
 ={}&\frac{\beta}{\kappa}\|\jumpn{\nabla_hv}\|_{L^2(\Ei)}^2
 +\alpha\kappa\|\jump v\|_{L^2(\Ei)}^2
 +\alpha\kappa\|v\|_{L^2(\Gamma_D)}^2\\
 &+\frac{\delta}{\kappa}\|\partial_n v\|_{L^2(\Gamma_R)}^2
 +(1-\delta)\kappa\|v\|_{L^2(\Gamma_R)}^2
 =\|v\|_{\mathcal G,h}^2.\nonumber
\end{align}
We use the name \emph{graph norm} because the Gram matrix of this norm will be used as the coefficient-space Riesz map.  This should not be confused with the stronger $DG+$ norm that appears in continuity estimates.  In the transmission tests, $\kappa^{\pm1}$ on interior edges is replaced by $\xi_e^{\pm1}$ from \eqref{eq:xi-interface}.  Thus the heterogeneous implementation changes only the positive edge scale.  It preserves the reciprocal solution-jump/flux-jump balance used in the coercive identity.
\end{remark}

\paragraph{Weight conventions.}
The PWDG operator and graph metric use $\alpha\xi_e$ and $\beta/\xi_e$ on the interior solution and flux jumps.  The residual $\mathcal J$ uses the same interior and Dirichlet weights but the strong Robin residual with factor $(2\kappa)^{-1}$.  The local rank metric $m_K$ instead uses only the dimensional pair $\kappa_K,\kappa_K^{-1}$.  It is a rank diagnostic and does not enter the Galerkin form or the residual objective.

Let $A(Z)$ be the PWDG matrix in the active basis.  The Gram matrix of \eqref{eq:graphform} is
\begin{equation}\label{eq:Gmatrix}
 G_{ij}(Z)=\mathcal G_h(\varphi_j,\varphi_i)
 =\left[\frac{A-A^*}{2i}\right]_{ij},
 \qquad G=\frac{A-A^*}{2i}.
\end{equation}
If $G>0$, choose $B$ so that
\begin{equation}\label{eq:Bdef}
 B^*GB=I.
\end{equation}
In the computations $G=LL^*$ and $B$ is obtained from the triangular solve $L^*B=I$.  Writing $A=H+iG$ with $H=H^*$ gives the exact normal form
\begin{equation}\label{eq:normalform}
 \widehat A:=B^*AB=S+iI,
 \qquad S=B^*HB=S^*.
\end{equation}
It follows that $\widehat A$ is normal and $\sigma_{\min}(\widehat A)\ge1$.  With $c=B\widehat c$,
\begin{equation}\label{eq:gr-solve}
 (B^*AB)\widehat c=B^*f,
 \qquad c=B\widehat c,
\end{equation}
so the Galerkin field is unchanged.  The dense graph--Riesz realization is used here as a reference normalization.  Large-scale variants can approximate the Riesz map locally or by Schwarz/multilevel techniques \cite{MardalWinther2011}.

We measure intrinsic conditioning by
\begin{equation}\label{eq:kgr}
 \kappa_{\GR}=\kappa_2(B^*AB).
\end{equation}
If $\lambda_j$ are the eigenvalues of $S$, normality gives
\begin{equation}\label{eq:kgr-exact}
 {\kappa_{\GR}
 =\left(\frac{1+\max_j\lambda_j^2}{1+\min_j\lambda_j^2}\right)^{1/2}
 \le \sqrt{1+\gamma_h^2}},
 \qquad \gamma_h=\|S\|_2.
\end{equation}
The quantity $\kappa_{\GR}$ therefore measures the size of the Hermitian part of the discrete operator relative to the coercive skeleton metric.  In particular, it is not simply a measure of how well the original plane-wave coefficients have been scaled.

Table~\ref{tab:uniform-conditioning} shows the difference between the two condition numbers for the uniform 32-triangle transmission mesh at $\omega=12$.  As $p$ is increased, the Euclidean condition number grows very rapidly.  The graph--Riesz condition number grows much more slowly until the local traces themselves lose numerical rank.  Once this happens, normalization alone is not sufficient and a rank reduction is needed.
\begin{table}[htbp]
\centering
\caption{Ordinary and graph--Riesz conditioning for uniformly directed PWDG on the uniform 32-triangle mesh at $\omega=12$.}
\label{tab:uniform-conditioning}
\begin{tabular}{|r|r|r|r|}
\hline
$p$ & coefficients & $\kappa_2(A)$ & $\kappa_{\GR}$\\
\hline
3  &  96 & $2.87\times10^{1}$  & $2.70$\\
7  & 224 & $5.90\times10^{1}$  & $4.01$\\
11 & 352 & $3.21\times10^{2}$  & $1.53\times10^{1}$\\
15 & 480 & $4.47\times10^{4}$  & $1.99\times10^{1}$\\
19 & 608 & $3.37\times10^{7}$  & $4.46\times10^{1}$\\
23 & 736 & $7.76\times10^{10}$ & $8.12\times10^{2}$\\
27 & 864 & $3.43\times10^{14}$ & $1.69\times10^{3}$\\
29 & 928 & $2.48\times10^{17}$ & ---\\
\hline
\end{tabular}
\end{table}
At $p=29$ the smallest computed graph eigenvalue is $-2.0\times10^{-14}$, a roundoff signature of numerical rank loss.  We therefore report $\kappa_{\GR}$ as undefined before trace compression.

\subsection{Local trace-rank compression}\label{subsec:compression}
We now consider what happens when several plane waves are present on the same element.  Near linear dependence of high-order plane-wave traces is well known \cite{CongreveGedickePerugia2019,BarucqBendaliDiazTordeux2021,ParolinHuybrechsMoiola2023}.  Rather than testing rank with the assembled global graph matrix, we use a local Cauchy-trace Gram matrix.  For $K\in\mathcal T_h$, define
\begin{equation}\label{eq:localmetric}
 m_K(u,v)=\sum_{e\subset\partial K}\left(
 \kappa_K\int_eu\overline v\,ds
 +\kappa_K^{-1}\int_e\partial_nu\,\overline{\partial_nv}\,ds\right).
\end{equation}
The factors $\kappa_K$ and $\kappa_K^{-1}$ balance the two traces in the same dimensions as the PWDG graph weights.  For the homogeneous calculations with $\alpha=\beta=1/2$ the two scalings are proportional before the element traces are assembled into global edge terms.  We stress, however, that $m_K$ is used only to determine numerical rank.  It is independent of $\alpha,\beta,\delta$ and $\xi_e$, and it is not the restriction of $\mathcal G_h$ to one element.  For the local basis $\Phi_K=(\phi_{K,z_1},\ldots,\phi_{K,z_{r_K}})$ let
$M_K=(m_K(\phi_{K,z_j},\phi_{K,z_i}))_{ij}=Q_K\Lambda_KQ_K^*$, with
$\lambda_{K,1}\ge\cdots\ge0$.  The matrix $M_K$ is the Gram matrix of the local Cauchy traces in the metric $m_K$.  Hence $M_K$ is Hermitian positive semidefinite and, for a coefficient vector $a\in\mathbb C^{r_K}$,
\[
 m_K(\Phi_Ka,\Phi_Ka)=a^*M_Ka.
\]
Thus an eigenvector $q_{K,j}$ gives a particular linear combination $\Phi_Kq_{K,j}$ of the local plane waves, and $\lambda_{K,j}$ measures the squared size of its Cauchy trace in the metric $m_K$.  If $\lambda_{K,j}$ is small, this combination has very small Dirichlet and normal traces on $\partial K$ and contributes little independent information to the global skeleton problem.  Since $M_K$ is Hermitian, the eigenvectors are chosen Euclidean-orthonormal, so $Q_K^*Q_K=I$.  In particular, $Q_{K,r}^*Q_{K,r}=I$, whereas $Q_{K,r}^*M_KQ_{K,r}=\Lambda_{K,r}$.  Thus the retained eigenvectors are orthonormal in coefficient space but are not yet orthonormal in the trace metric.  Multiplication by $\Lambda_{K,r}^{-1/2}$ converts them to an $m_K$-orthonormal trace basis.  We retain the modes satisfying
\begin{equation}\label{eq:rankrule}
 \lambda_{K,j}>\tau_{\rm rank}\lambda_{K,1},
 \qquad \tau_{\rm rank}=10^{-12}
\end{equation}
in the reported calculations, and set $C_K=Q_{K,r}\Lambda_{K,r}^{-1/2}$.  Then the retained local trace basis $\Phi_KC_K$ is $m_K$-orthonormal.  With $C=\operatorname{diag}(C_K)$ the compressed Galerkin matrices are
\begin{equation}\label{eq:compressed-system}
 A_c=C^*AC,\qquad f_c=C^*f,\qquad G_c=C^*GC,
\end{equation}
and the graph--Riesz solve is applied to $(A_c,f_c,G_c)$.

It is important that this compression changes the retained Galerkin space; it is not only a rescaling of coordinates.  If $c_\perp$ lies in the discarded eigenspace, then
\begin{equation}\label{eq:discard-bound}
 m_K(\Phi_Kc_\perp,\Phi_Kc_\perp)
 \le \tau_{\rm rank}\lambda_{K,1}\|c_\perp\|_2^2.
\end{equation}
Thus $r_K$ denotes the nominal local dimension and $r_K^{\rm eff}$ the retained trace rank.  In the double-precision calculations below we initially take $\tau_{\rm rank}=10^{-12}$.  This is only a numerical cutoff and is not part of the definition of the method.  In Section~\ref{subsec:dtn-precision} we decrease this value and also recompute the trace spectrum at higher precision.  Those experiments show that useful high-$p$ trace modes may lie below $10^{-12}$, so the cutoff must be chosen together with the working precision and the accuracy required from the calculation.

\section{Direction Optimization by the Skeleton Residual}\label{sec:nonlinear}

\subsection{The skeleton residual}\label{subsec:exactJ}
We now introduce the residual that will be used in both direction optimization problems.  For a coefficient vector $c$ and a direction state $Z$, let
\[
 v_h(c,Z)=\sum_{j=1}^{N_h}c_j\varphi_j(Z)
\]
be the associated broken Trefftz field.  For the canonical constant-wavenumber model \eqref{eq:model}, define the direction objective
\begin{align}\label{eq:J}
 \mathcal J(c,Z)=&\sum_{e\in\Ei}\int_e
 \left[
 \alpha\kappa\big|\jump{v_h(c,Z)}\big|^2
 +\frac{\beta}{\kappa}\big|\jumpn{\nabla_hv_h(c,Z)}\big|^2
 \right]ds\\
 &+\alpha\kappa\int_{\Gamma_D}|v_h(c,Z)-g|^2\,ds\nonumber\\
 &+\frac{1}{2\kappa}\int_{\Gamma_R}
 \big|\partial_nv_h(c,Z)+i\kappa v_h(c,Z)\big|^2\,ds.\nonumber
\end{align}
No volume residual is needed because $v_h(c,Z)$ satisfies the Helmholtz equation in every element.  The interior terms measure the jumps of the field and normal flux, the Dirichlet term measures the boundary mismatch, and the final term measures the impedance condition on $\Gamma_R$.  For piecewise-constant transmission experiments with $\Gamma_R=\varnothing$, the first two interior weights are replaced by $\alpha\xi_e$ and $\beta/\xi_e$, with $\xi_e$ from \eqref{eq:xi-interface}, exactly as in the implemented PWDG form.

The factor $(2\kappa)^{-1}$ on $\Gamma_R$ is a least-squares scaling, not a PWDG flux parameter.  Indeed,
\begin{equation}\label{eq:impedance-weight-expansion}
 \frac{1}{2\kappa}|\partial_n v+i\kappa v|^2
 =\frac{1}{2\kappa}|\partial_n v|^2
 +\frac{\kappa}{2}|v|^2
 +\operatorname{Im}(\partial_n v\,\overline v).
\end{equation}
For the symmetric Robin split $\delta=1/2$, the first two diagonal terms have the same coefficients as the two separate Robin traces in $\mathcal G_h$, but the cross term in \eqref{eq:impedance-weight-expansion} remains.  Thus $\mathcal J$ and $\mathcal G_h$ agree term by term only when $\Gamma_R=\varnothing$.  On $\Gamma_R$, $\mathcal J$ measures the strong impedance mismatch while $\mathcal G_h$ is the coercive PWDG trace metric.

When $\Gamma_R=\varnothing$, as in the manufactured square-domain examples, the relation between the residual and the DG/graph norm is simpler and in fact exact.

\begin{proposition}[exact graph-error identity for the all-Dirichlet specialization]\label{prop:exactJ}
Assume $\Gamma_R=\varnothing$ and let $u$ be the exact solution, with the usual continuity of $u$ and normal flux across any material interfaces \cite{MoiolaSpence2019}.  Then for every broken Trefftz field $v_h(c,Z)$,
\begin{equation}\label{eq:exactJ}
 {\mathcal J(c,Z)=\mathcal G_h(u-v_h(c,Z),u-v_h(c,Z))
 =\|u-v_h(c,Z)\|_{\mathcal G,h}^2.}
\end{equation}
The identity is independent of how $c$ is obtained.  It requires, of course, that the same positive weights $\alpha$ and $\beta$ be used in $\mathcal J$ and $\mathcal G_h$.  This is how both quantities are defined here.  All all-Dirichlet computations use the common choice $\alpha=\beta=1/2$.
\end{proposition}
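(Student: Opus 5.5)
The identity follows by comparing the two quadratic forms term by term after substituting $w:=u-v_h(c,Z)$. With $\Gamma_R=\varnothing$, the definition \eqref{eq:graphform} gives
\[
 \mathcal G_h(w,w)=\sum_{e\in\Ei}\left[\alpha\xi_e\int_e|\jump w|^2\,ds+\frac{\beta}{\xi_e}\int_e|\jumpn{\nabla_hw}|^2\,ds\right]+\alpha\kappa\int_{\Gamma_D}|w|^2\,ds .
\]
We use $\jump w\cdot\overline{\jump w}=|\jump w|^2$ for the vector jump, since the normals are real. The residual \eqref{eq:J} with $\Gamma_R=\varnothing$ has exactly the same three groups of terms, evaluated on $v_h$ and $v_h-g$ instead of on $w$. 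The weights agree: $\xi_e=\kappa$ in the homogeneous case, and $\alpha\xi_e,\beta/\xi_e$ in the transmission case by the stated convention. So it suffices to show, edge by edge, that the traces of $w$ reproduce the residual traces up to sign.

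\emph{Interior edges.} The exact solution $u$ is regular enough to have single-valued traces of $u$ and of its normal flux on every interior edge $e$. This holds on material interfaces as well, by the continuity assumed from \cite{MoiolaSpence2019}. Hence $\jump u=u(n^++n^-)=0$, because $n^-=-n^+$, and $\jumpn{\nabla u}=\nabla u\cdot(n^++n^-)=0$. By linearity of the jump operators, $\jump w=-\jump{v_h}$ and $\jumpn{\nabla_hw}=-\jumpn{\nabla_hv_h}$, and the moduli coincide.

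\emph{Dirichlet edges.} Since $u=g$ on $\Gamma_D$, we get $w=g-v_h$ there, so $|w|^2=|v_h-g|^2$. Summing the edge contributions gives $\mathcal J(c,Z)=\mathcal G_h(w,w)$. The equality with $\|w\|_{\mathcal G,h}^2$ is then just the definition of the seminorm, which is a genuine quantity even if $w$ lies outside $V_h(Z)$. The argument never uses how $c$ is obtained, nor the Trefftz property of $v_h$ beyond the existence of traces. The Trefftz property is needed only so that $\mathcal J$ contains no volume term.

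\emph{Main point requiring care.} The only delicate step is justifying that $u$ has well-defined $L^2$ traces of $u$ and $\partial_nu$ on edges, and that these traces agree from both sides. I would handle this by assuming, as is implicit in the statement, that $u$ is the piecewise-smooth exact solution: elementwise $H^{3/2+\epsilon}$ or smooth on straight edges, as for the manufactured plane-wave solutions. Under that assumption the transmission conditions give the two-sided continuity.

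A second point to check is that the Dirichlet weight in $\mathcal G_h$ is $\alpha\kappa$ and not $\alpha\xi_e$. This matches \eqref{eq:J}, so no discrepancy arises. Finally, the statement requires the same $\alpha$ and $\beta$ in $\mathcal J$ and $\mathcal G_h$, which holds by construction.
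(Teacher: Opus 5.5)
Your proposal is correct and follows the paper's proof. Like the paper, you set $w=u-v_h$, use continuity of $u$ and its normal flux to get $\jump{w}=-\jump{v_h}$ and $\jumpn{\nabla_h w}=-\jumpn{\nabla_h v_h}$ on interior edges and $w=g-v_h$ on $\Gamma_D$, and then compare the two forms term by term; your remarks on trace regularity and on matching the weights spell out what the paper leaves implicit.
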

\begin{proof}
Across an interior edge,
\[
 \jump{u-v_h}=-\jump{v_h},\qquad
 \jumpn{\nabla(u-v_h)}=-\jumpn{\nabla_hv_h},
\]
because the exact field and its normal flux are continuous.  On the Dirichlet boundary $u=g$, so $u-v_h=g-v_h$.  Substitution into \eqref{eq:graphform} with $\Gamma_R=\varnothing$ gives \eqref{eq:exactJ} term by term.
\end{proof}

All terms in Proposition~\ref{prop:exactJ} are computed directly from edge integrals for the plane-wave spaces used here.  No pointwise sampling of the interior edges is required.

\subsection{Residual control for mixed Dirichlet--impedance boundaries}\label{subsec:mixed-control}
For the mixed problem, the impedance residual contains a cross term and $\mathcal J$ is no longer identical to the PWDG graph norm.  We can still obtain an $L^2$ error bound provided a stability estimate is available for broken Trefftz fields.  We state this requirement explicitly.

\begin{assumption}[broken Trefftz stability for the mixed problem]\label{ass:mixed-stability}
Let $w$ be elementwise Helmholtz on $\mathcal T_h$.  There is a constant $C_{\rm stab}(\kappa,\Omega)$, independent of the particular broken representation of $w$, such that
\begin{equation}\label{eq:mixed-stability}
\begin{aligned}
 \|w\|_{L^2(\Omega)} \le C_{\rm stab}(\kappa,\Omega)\Big(&
 \sum_{e\in\mathcal E_h^I}\big[\kappa\|\jump{w}\|_{L^2(e)}^2
 +\kappa^{-1}\|\jumpn{\nabla_h w}\|_{L^2(e)}^2\big]\\
 &+\kappa\|w\|_{L^2(\Gamma_D)}^2
 +\kappa^{-1}\|\partial_n w+i\kappa w\|_{L^2(\Gamma_R)}^2\Big)^{1/2}.
\end{aligned}
\end{equation}
\end{assumption}
Assumption~\ref{ass:mixed-stability} plays the same role as a Helmholtz stability estimate in a standard residual argument.  Continuous impedance estimates give explicit information on the dependence on $\kappa$ and the geometry \cite{BaskinSpenceWunsch2016}.  We state the broken estimate as an assumption because the form above is not proved here as a direct consequence of the continuous result.

\begin{proposition}[mixed-boundary residual control]\label{prop:mixed-control}
Under Assumption~\ref{ass:mixed-stability}, let $u$ solve \eqref{eq:model} and let $v_h$ be any broken Trefftz field.  Then
\begin{equation}\label{eq:mixed-J-bound}
 {\|u-v_h\|_{L^2(\Omega)}\le C_J(\alpha,\beta)\,
 C_{\rm stab}(\kappa,\Omega)\,\mathcal J(v_h)^{1/2},}
\end{equation}
where
\begin{equation}\label{eq:CJ-explicit}
 C_J(\alpha,\beta)
 :=\sqrt{\max\{\alpha^{-1},\beta^{-1},2\}}.
\end{equation}
In particular, $C_J(1/2,1/2)=\sqrt2$ for the weights used in the numerical experiments.
\end{proposition}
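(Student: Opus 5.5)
Apply Assumption~\ref{ass:mixed-stability} to the error $w:=u-v_h$ and then compare the bracket in \eqref{eq:mixed-stability} with $\mathcal J(v_h)$ term by term. The only work is to identify each trace of $w$ with a residual trace of $v_h$ and to track the weights.

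First we check that $w$ is admissible. On each element, $u$ solves the Helmholtz equation by \eqref{eq:model}, and $v_h$ does too by Proposition~\ref{prop:trefftz}. Hence $w$ is elementwise Helmholtz, and \eqref{eq:mixed-stability} applies with a constant independent of the representation of $w$.

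Next we rewrite the traces of $w$, exactly as in the proof of Proposition~\ref{prop:exactJ}. On interior edges, $u$ and $\partial_n u$ are continuous, so $\jump{w}=-\jump{v_h}$ and $\jumpn{\nabla_h w}=-\jumpn{\nabla_h v_h}$. On $\Gamma_D$ we have $w=g-v_h$. On $\Gamma_R$ the exact solution satisfies $\partial_n u+i\kappa u=0$, so $\partial_n w+i\kappa w=-(\partial_n v_h+i\kappa v_h)$. The right-hand side of \eqref{eq:mixed-stability} is then
\[
 \sum_{e}\big[\kappa\|\jump{v_h}\|^2+\kappa^{-1}\|\jumpn{\nabla_hv_h}\|^2\big]
 +\kappa\|v_h-g\|_{L^2(\Gamma_D)}^2+\kappa^{-1}\|\partial_nv_h+i\kappa v_h\|_{L^2(\Gamma_R)}^2 .
\]

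Now we compare weights with \eqref{eq:J}:
\begin{itemize}
\item $\kappa=\alpha^{-1}\cdot\alpha\kappa$ on the interior jump term and on the Dirichlet term;
\item $\kappa^{-1}=\beta^{-1}\cdot\beta/\kappa$ on the flux-jump term;
\item $\kappa^{-1}=2\cdot(2\kappa)^{-1}$ on the impedance term.
\end{itemize}
Each term is therefore at most $\max\{\alpha^{-1},\beta^{-1},2\}$ times the corresponding term of $\mathcal J(v_h)$, so the whole bracket is at most $C_J(\alpha,\beta)^2\,\mathcal J(v_h)$. Taking square roots and inserting this into \eqref{eq:mixed-stability} gives \eqref{eq:mixed-J-bound}. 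For $\alpha=\beta=1/2$ the maximum is $2$, so $C_J(1/2,1/2)=\sqrt2$.

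There is no real obstacle here: the analytic content sits entirely in Assumption~\ref{ass:mixed-stability}. The one point needing care is the $\Gamma_R$ term. It must be the strong impedance residual $\partial_nv+i\kappa v$, not the split graph terms, because that is the quantity in which the exact solution's impedance condition cancels. The sign convention $e^{+i\omega t}$ must also be used consistently in both \eqref{eq:model} and the assumption. If the interface version with $\xi_e$ were needed, the same comparison would go through with $\xi_e$ in place of $\kappa$, provided the assumption is stated with those weights.
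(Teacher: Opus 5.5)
Your proposal is correct and follows essentially the same route as the paper. You apply Assumption~\ref{ass:mixed-stability} to $w=u-v_h$, rewrite each trace residual of $w$ as the corresponding residual of $v_h$, and compare weights with \eqref{eq:J} to obtain the factors $\alpha^{-1},\beta^{-1},\alpha^{-1},2$. You simply spell out the term-by-term details that the paper's proof states in one line.
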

\begin{proof}
Set $w=u-v_h$.  The exact field has zero interior jumps, satisfies $u=g$ on $\Gamma_D$, and satisfies $\partial_nu+i\kappa u=0$ on $\Gamma_R$.  Hence every trace residual of $w$ is, up to sign, exactly the corresponding term appearing in \eqref{eq:J}.  Comparing the four groups in \eqref{eq:mixed-stability} with \eqref{eq:J} gives the factors $\alpha^{-1}$, $\beta^{-1}$, $\alpha^{-1}$, and $2$, respectively.  Therefore the unweighted quantity in \eqref{eq:mixed-stability} is bounded by $C_J(\alpha,\beta)\mathcal J(v_h)^{1/2}$ with \eqref{eq:CJ-explicit}, which proves the result.
\end{proof}

For the all-Dirichlet problem we therefore have the identity \eqref{eq:exactJ}, whereas for the mixed problem we obtain the upper bound \eqref{eq:mixed-J-bound}.  We do not prove a converse efficiency estimate.  The estimate is nevertheless sufficient to justify using $\mathcal J$ as a computable error quantity for the mixed problem, with the qualification that the bound may become less sharp when the Helmholtz stability constant is large.

\subsection{Evaluation on straight edges and local contributions}\label{subsec:exact-edge}
We next describe the computation of the residual matrices on a straight edge.  Let the edge be parameterized by
\[
 x(t)=a+t\ell,\qquad 0\le t\le1,\qquad |e|=|\ell|.
\]
For two local Trefftz functions
\[
 \phi_j(x)=e^{iq_j\cdot(x-x_{K_j})},\qquad
 \phi_i(x)=e^{iq_i\cdot(x-x_{K_i})},
\]
including complex directions, one has the closed formula
\begin{equation}\label{eq:edge-exp-integral}
 \int_e\phi_j\overline{\phi_i}\,ds
 =|e|\,e^{iq_j\cdot(a-x_{K_j})-i\overline{q_i}\cdot(a-x_{K_i})}
 \Psi\!\left(i(q_j-\overline{q_i})\cdot\ell\right),
\end{equation}
where
\[
 \Psi(\zeta)=\frac{e^\zeta-1}{\zeta},\qquad \Psi(0)=1.
\]
In the implementation we evaluate $\Psi(\zeta)$ as $\operatorname{expm1}(\zeta)/\zeta$ when $\zeta\ne0$ and set $\Psi(0)=1$.  Normal derivatives introduce only the constant factors $iq_j\cdot n$ and $-i\overline{q_i}\cdot n$.  Hence all basis--basis contributions from interior edges and homogeneous straight boundary edges are available in closed form.  Differentiating with respect to a direction parameter only adds the first moment $\int_0^1t e^{\zeta t}\,dt$, so the corresponding matrix derivatives are computed in the same way.

Numerical quadrature is used only for data-dependent terms for which no closed expression is available, for example
\[
 \int_{\Gamma_D}g\,\overline{\phi_i}\,ds,
 \qquad
 \int_{\Gamma_D}|g|^2\,ds,
\]
when $g$ has no convenient closed form.  For manufactured plane-wave boundary data these terms may also be evaluated analytically.  The interior residual matrix is therefore assembled from exact edge moments rather than from sampled residual values.

For fixed directions, expansion of \eqref{eq:J} gives the Hermitian quadratic
\begin{equation}\label{eq:Jquadratic}
 {\mathcal J(c,Z)=c^*M(Z)c-2\Re\{c^*d(Z)\}+\gamma.}
\end{equation}
The weights in \eqref{eq:J} are retained in each part of this expansion.  With
$v_h=\sum_j c_j\varphi_j$, the entries of the Gram matrix for the canonical
constant-wavenumber problem are
\begin{align}\label{eq:M-explicit}
 M_{ij}(Z)={}&
 \sum_{e\in\Ei}\left[
 \alpha\kappa\int_e \jump{\varphi_j}\cdot\overline{\jump{\varphi_i}}\,ds
 +\frac{\beta}{\kappa}\int_e
 \jumpn{\nabla_h\varphi_j}\,\overline{\jumpn{\nabla_h\varphi_i}}\,ds
 \right] \nonumber\\
 &+\alpha\kappa\int_{\Gamma_D}\varphi_j\overline{\varphi_i}\,ds \nonumber\\
 &+\frac{1}{2\kappa}\int_{\Gamma_R}
 \big(\partial_n\varphi_j+i\kappa\varphi_j\big)
 \overline{\big(\partial_n\varphi_i+i\kappa\varphi_i\big)}\,ds .
\end{align}
The data terms are
\begin{equation}\label{eq:d-gamma-explicit}
 d_i(Z)=\alpha\kappa\int_{\Gamma_D}g\,\overline{\varphi_i}\,ds,
 \qquad
 \gamma=\alpha\kappa\int_{\Gamma_D}|g|^2\,ds.
\end{equation}
Thus $\alpha$ enters $M$, $d$, and $\gamma$, while $\beta$ enters the
normal-flux block of $M$.  The parameter $\delta$ does not occur in
\eqref{eq:M-explicit}: on $\Gamma_R$ the optimization functional uses the
impedance residual in \eqref{eq:J}, rather than the separately weighted
Dirichlet and Neumann traces used in the PWDG graph form.  For the
piecewise-constant transmission problem, the two interior weights in
\eqref{eq:M-explicit} are replaced by $\alpha\xi_e$ and $\beta/\xi_e$.
In the all-Dirichlet specialization, $M(Z)$ is precisely the matrix of the
DG/graph inner product on the current Trefftz space.  For the mixed
Dirichlet--impedance problem it is instead the Gram matrix of the residual
functional \eqref{eq:J}, and should not be confused with the PWDG graph matrix $G$.

For the all-Dirichlet problem, Proposition~\ref{prop:exactJ} also gives an elementwise error decomposition.  Write
\begin{align}\label{eq:edge-indicator}
 &\eta_e^2=\alpha\xi_e\|\jump{v_h}\|_{L^2(e)}^2
 +\frac{\beta}{\xi_e}\|\jumpn{\nabla_hv_h}\|_{L^2(e)}^2,
 \qquad e\in\mathcal E_h^I,\\
 &\eta_e^2=\alpha\kappa\|v_h-g\|_{L^2(e)}^2,
 \qquad e\subset\Gamma_D.
\end{align}
Assign half of each interior contribution to either adjacent element and the full boundary contribution to its element:
\begin{equation}\label{eq:element-indicator}
 \eta_K^2=\frac12\sum_{e\subset\partial K\cap\mathcal E_h^I}\eta_e^2
 +\sum_{e\subset\partial K\cap\Gamma_D}\eta_e^2.
\end{equation}
Then
\begin{equation}\label{eq:exact-estimator}
 {\sum_K\eta_K^2=\mathcal J(c,Z)=\|u-v_h\|_{\mathcal G,h}^2.}
\end{equation}
Thus, for these tests, the numbers $\eta_K$ localize the exact DG/graph error.  We use them first to decide which elements require attention and, when only mesh refinement is allowed, in the usual D\"orfler marking procedure \cite{Dorfler1996}.  Residual based PWDG refinement is studied in \cite{KapitaMonkWarburton2015,CongreveHoustonPerugia2019}.

\subsection{Part A: direction-optimized PWDG as a constrained minimization problem}\label{subsec:pwdg-constrained}
We first consider the formulation in which the PWDG equations are retained during the direction search.  For each admissible direction state $Z$, let $c_{\rm PW}(Z)$ be the coefficient vector obtained from the fixed-direction PWDG problem,
\begin{equation}\label{eq:cpw}
 A(Z)c_{\rm PW}(Z)=f(Z),
\end{equation}
computed in practice by graph--Riesz normalization.  Define the reduced continuous PWDG skeleton-residual objective
\begin{equation}\label{eq:Jpwdg}
 \widehat{\mathcal J}_{\rm PW}(Z)
 :=\mathcal J(c_{\rm PW}(Z),Z).
\end{equation}
\begin{equation}\label{eq:directionopt}
 Z_{\rm PW}^\star\in
 \operatorname*{arg\,min}_{Z\in\mathcal Z_h(\bm r,\eta_{\max})}
 \widehat{\mathcal J}_{\rm PW}(Z),
\end{equation}
Changes of local graph rank are treated as outer active-set events.  Equivalently, before compression events are introduced, it is the equality-constrained problem
\begin{equation}\label{eq:constrained-main}
 {
 \min_{c,Z}\mathcal J(c,Z)
 \quad\text{subject to}\quad A(Z)c=f(Z).}
\end{equation}
Equation \eqref{eq:constrained-main} has the form of a finite-dimensional equality-constrained optimization problem.  Following the real-angle PWDG control formulation of Agrawal and Hoppe \cite{AgrawalHoppe2017}, we introduce a complex Lagrange multiplier $\mu$.  The directions are now allowed to be complex and the objective is the skeleton residual.  For real direction coordinates $\rho$, define
\begin{equation}\label{eq:partA-lagrangian}
 \mathcal L(c,\rho,\mu)
 =\mathcal J(c,\rho)+2\Re\!\left[\mu^*\big(A(\rho)c-f(\rho)\big)\right].
\end{equation}
On a fixed-rank stratum, the first-order constrained stationarity conditions are
\begin{align}
 &A(\rho)c=f(\rho), \label{eq:partA-kkt-constraint}\\
 &\partial_{\bar c}\mathcal J+A(\rho)^*\mu=0, \label{eq:partA-kkt-c}\\
 &\partial_{\rho_\ell}\mathcal J\big|_c
 +2\Re\!\left[\mu^*\big(A_{\rho_\ell}c-f_{\rho_\ell}\big)\right]=0
 \qquad(\ell=1,\ldots,m). \label{eq:partA-kkt-rho}
\end{align}
Since $c$ is constrained by the PWDG equations, $\partial_{\bar c}\mathcal J$ is not required to vanish.  Instead, \eqref{eq:partA-kkt-c} states that this gradient is balanced by $A^*\mu$.  We later use the equivalent adjoint variable $\lambda=-\mu$ when evaluating the reduced derivatives.

The multiplier can also be interpreted as the sensitivity to a perturbation of the discrete PWDG equation.  Suppose that
\[
 A(\rho)c=f(\rho)+\varepsilon r.
\]
Differentiating the constrained objective with respect to $\varepsilon$ gives the dual pairing with $\mu$, with the sign determined by \eqref{eq:partA-lagrangian}.  Thus $\mu$ measures the change in the optimized residual produced by a defect in the discrete state equation.  It is an adjoint variable and is not another approximation of the Helmholtz solution.  This point is also useful when comparing the two formulations.  In Part~B the coefficients are unconstrained least-squares variables and satisfy $\partial_{\bar c}\mathcal J=0$, while in Part~A this gradient is balanced by $A^*\mu$.  If the residual is zero and $A$ is nonsingular, then $\partial_{\bar c}\mathcal J=0$ and consequently $\mu=\lambda=0$.

The bounds on the angles do not by themselves guarantee that $Z\mapsto c_{\rm PW}(Z)$ is well defined for every admissible point.  For example, two local directions can coalesce and make the coefficient representation singular.  For existence statements we therefore restrict to a rank-safe closed subset
\begin{equation}\label{eq:Zdelta}
 \mathcal Z_h^\delta
 :=\{Z\in\mathcal Z_h(\bm r,\eta_{\max}):\lambda_{\min}(G(Z))\ge\delta\},
 \qquad \delta>0,
\end{equation}
with a fixed active basis.  On $\mathcal Z_h^\delta$, $A(Z)$ is nonsingular, $c_{\rm PW}(Z)$ and $\widehat{\mathcal J}_{\rm PW}(Z)$ are continuous, and a minimizer exists by compactness.  In the actual algorithm, approaching a rank-deficient configuration triggers local trace compression rather than an attempt to differentiate through the rank change.

In the complex-angle transmission calculation we add $10^{-12}\sum_j\eta_j^2$ to the nonlinear objective.  This very small term is used only to select between residual-equivalent representations.  It is not included in the reported values of $\mathcal J$.

\subsection{Local analysis for fixed rank}\label{subsec:local-theory}
We now give a local result for the reduced direction problem.  There are two reasons for restricting the statement to a fixed-rank neighborhood.  A permutation of the rays on one element does not change the local Trefftz space, so the ordered direction vector is not uniquely determined.  Also, if two rays coalesce or a trace becomes redundant, the dimension of the retained space changes.  We therefore fix the active rank and measure the distance to the finite set of permutations of the target direction state.

We assume that the residual determines the directions locally.  More precisely, if $R(\rho)$ denotes a smooth residual coordinate vector, we require $DR(\rho_\star)$ to have full column rank after one representative of the permutation orbit has been fixed.  Equivalently, $DR(\rho_\star)^*DR(\rho_\star)$ is positive definite.  This assumption rules out a nonzero first-order change of the directions that leaves the residual unchanged.  It is not satisfied when rays coalesce or become redundant, which is why those cases are treated separately by the rank reduction.

Let $\rho\in\mathbb R^m$ collect the free real direction coordinates and define
\[
 U(\rho):=v_h(c_{\rm PW}(\rho),\rho).
\]
Local ray permutations represent the same Trefftz space.  We therefore measure distance to the finite permutation orbit $\mathcal O(\rho_\star)$ and work on a neighborhood in which the active trace rank is fixed.

\begin{theorem}[local quadratic growth and direction-to-field stability]\label{thm:local-growth}
Let $\rho_\star$ satisfy $\widehat{\mathcal J}_{\rm PW}(\rho_\star)=0$.  Assume that, in a neighborhood of $\rho_\star$, the active graph rank is constant, $A(\rho)$ is uniformly invertible, and a smooth residual coordinate map $R$ with
\[
 \widehat{\mathcal J}_{\rm PW}(\rho)=\|R(\rho)\|_2^2
\]
has full-column-rank Jacobian $DR(\rho_\star)$.  Then there are constants $c_0,C_0,C_U>0$ such that
\begin{equation}\label{eq:local-growth}
 c_0\,\operatorname{dist}(\rho,\mathcal O(\rho_\star))^2
 \le \widehat{\mathcal J}_{\rm PW}(\rho)
 \le C_0\,\operatorname{dist}(\rho,\mathcal O(\rho_\star))^2,
\end{equation}
and
\begin{equation}\label{eq:direction-field-lipschitz}
 \|U(\rho)-U(\rho_\star)\|_{\mathcal G,h}
 \le C_U\,\operatorname{dist}(\rho,\mathcal O(\rho_\star))
\end{equation}
for $\rho$ sufficiently close to $\rho_\star$.  Consequently, standard Gauss--Newton or trust-region/Levenberg--Marquardt iteration is locally convergent, with the usual quadratic zero-residual Gauss--Newton behavior once the full-rank neighborhood is entered \cite{More1978}.
\end{theorem}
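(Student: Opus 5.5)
The plan is to reduce everything to a standard Taylor argument near a zero of the smooth map $R$, and then transfer the result to the whole permutation orbit by symmetry.

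\textbf{Step 1 (regularity).} Since the active rank is fixed and $A(\rho)$ is uniformly invertible near $\rho_\star$, $c_{\rm PW}(\rho)=A(\rho)^{-1}f(\rho)$ is smooth. The entries of $A$ and $f$ are built from the exponential edge moments of Section~\ref{subsec:exact-edge}, which are analytic in the direction coordinates. Hence $U(\rho)$ and its traces depend smoothly on $\rho$, and so does $R$ by hypothesis.

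\textbf{Step 2 (two-sided bound at one representative).} Because $\widehat{\mathcal J}_{\rm PW}(\rho_\star)=0$, we have $R(\rho_\star)=0$. Taylor expansion gives
\[
 R(\rho)=DR(\rho_\star)(\rho-\rho_\star)+O(|\rho-\rho_\star|^2).
\]
Full column rank yields $\|DR(\rho_\star)x\|_2\ge\sigma_{\min}\|x\|_2$ with $\sigma_{\min}>0$. Therefore, for $|\rho-\rho_\star|\le\sigma_{\min}/(2C)$,
\[
 \tfrac{\sigma_{\min}}{2}|\rho-\rho_\star|\le\|R(\rho)\|_2\le(\|DR(\rho_\star)\|+1)|\rho-\rho_\star|.
\]
Squaring gives \eqref{eq:local-growth} with $|\rho-\rho_\star|$ in place of the distance.

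\textbf{Step 3 (passing to the orbit distance; the main obstacle).} The orbit $\mathcal O(\rho_\star)$ is finite. $\widehat{\mathcal J}_{\rm PW}$ is invariant under local ray permutations $\pi$, since $V_h$ and hence $c_{\rm PW}$ only relabel. So the bound of Step~2 holds in a ball around each $\pi\rho_\star$, with the same constants because $\pi$ is an isometry. I would intersect with a neighborhood $\mathcal N$ of $\rho_\star$ whose radius is smaller than half the minimum separation between distinct orbit points. This separation is positive, because coalescing rays are excluded by the fixed-rank and full-rank hypotheses: a permutation fixing $\rho_\star$ coordinatewise is the identity on coordinates. In $\mathcal N$ the distance to the orbit equals $|\rho-\rho_\star|$, which gives \eqref{eq:local-growth}. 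I expect the delicate point here to be checking that the orbit points are genuinely isolated under the assumptions. If needed, I would take "$\rho$ sufficiently close" to mean within this radius, so the issue reduces to the nondegeneracy already assumed.

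\textbf{Step 4 (field stability and iteration).} By Step~1, $\rho\mapsto U(\rho)$ is $C^1$ into the finite-dimensional broken Trefftz space with norm $\|\cdot\|_{\mathcal G,h}$, which is a norm on the retained space by \eqref{eq:coerciveidentity} and uniform invertibility. The mean value theorem then gives $\|U(\rho)-U(\rho_\star)\|_{\mathcal G,h}\le \sup\|DU\|\,|\rho-\rho_\star|$, and Step~3 converts this to \eqref{eq:direction-field-lipschitz}.

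Finally, the Gauss--Newton matrix $DR^*DR$ is positive definite near $\rho_\star$ and the residual vanishes there. The standard local theory for zero-residual full-rank least squares \cite{More1978} then gives local quadratic convergence of Gauss--Newton and local convergence of Levenberg--Marquardt or trust-region iterations, with the limit in $\mathcal O(\rho_\star)$.
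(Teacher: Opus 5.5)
Your proposal is correct and follows essentially the paper's argument: smoothness of $c_{\rm PW}=A^{-1}f$, a Taylor expansion of $R$ about a zero where $DR$ has full column rank, absorption of the quadratic remainder, a first-order Taylor bound for $U$, and an appeal to Mor\'e. The only difference is that the paper expands about the nearest orbit representative $\widetilde\rho_\star$ while you shrink the neighborhood around $\rho_\star$, and the step you flag as delicate is not delicate: distinct points of the finite (or, after lifting the angles, discrete) orbit are automatically separated, so $\operatorname{dist}(\rho,\mathcal O(\rho_\star))=|\rho-\rho_\star|$ near $\rho_\star$ without any appeal to the nondegeneracy hypotheses.
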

\begin{proof}
With the active rank fixed, the plane-wave traces depend smoothly on $\rho$, and consequently so do $A(\rho)$ and $f(\rho)$.  The assumed uniform invertibility of $A(\rho)$ then gives a smooth coefficient map $c_{\rm PW}(\rho)=A(\rho)^{-1}f(\rho)$.  Hence the field $U(\rho)$ and residual vector $R(\rho)$ are also smooth.

Choose the nearest representative $\widetilde\rho_\star$ in the finite permutation orbit $\mathcal O(\rho_\star)$ and set $d=\rho-\widetilde\rho_\star$.  Because $R(\widetilde\rho_\star)=0$, Taylor expansion gives
\[
 R(\rho)=DR(\widetilde\rho_\star)d+\mathcal O(|d|^2).
\]
Since $DR(\widetilde\rho_\star)$ has full column rank, its smallest and largest singular values give positive lower and upper bounds for the linear term.  After reducing the neighborhood, the quadratic remainder can be absorbed into these bounds and we obtain $c_0|d|^2\leq\|R(\rho)\|_2^2\leq C_0|d|^2$, which proves \eqref{eq:local-growth}.  Applying a first-order Taylor estimate to $U$ gives \eqref{eq:direction-field-lipschitz}.  The final statement follows from the usual local theory for full-rank zero-residual least squares \cite{More1978}.
\end{proof}

If rays coalesce or become redundant, the fixed-rank hypothesis is no longer satisfied.  In the algorithm this is treated by recomputing the local trace rank as described in Section~\ref{subsec:compression}; we do not differentiate through the change of rank.

\subsection{Relation to the Monk--Wang least-squares method}\label{subsec:monkwang}
Monk and Wang \cite{MonkWang1999} prescribe discontinuous local Helmholtz solutions and minimize a quadratic skeleton functional over their coefficients.  Their interior term uses the full gradient jump, whereas \eqref{eq:J} uses the normal-flux DG jump aligned with the PWDG graph form.  Here a different, purely oriented notation is useful.  Fix one unit normal $n_e$ and tangent $t_e$ on a straight edge and define the ordinary scalar difference
\[
 \sjump{v}:=v^+-v^-.
\]
Then $\sjump{\cdot}$ depends on this chosen orientation, unlike the DG double-line jump above, and
\[
 \sjump{\nabla v}=\sjump{\partial_{n_e}v}\,n_e+\partial_{t_e}\sjump{v}\,t_e.
\]
The residual normalization in \eqref{eq:J} is chosen so that this comparison has no additional arbitrary constants.  For constant $\kappa$, $\alpha=\beta=1/2$, and matching impedance sign conventions, multiplication of \eqref{eq:J} by $2\kappa$ gives unit weight on the normal-flux jump and impedance residual and weight $\kappa^2$ on the solution jump and Dirichlet residual.  Decomposing the full gradient jump into its normal and tangential parts therefore yields
\begin{equation}\label{eq:MWrelation}
 {\mathcal J_{\rm MW}(c,Z)
 =2\kappa\,\mathcal J(c,Z)
 +\sum_{e\in\Ei}\|\partial_{t_e}\sjump{v_h(c,Z)}\|_{L^2(e)}^2.}
\end{equation}
With this scaling, the relation between the two functionals is given exactly by \eqref{eq:MWrelation}.  The present functional retains the normal component of the gradient jump, whereas the Monk--Wang functional also contains the tangential derivative of the solution jump.  It should therefore be viewed as a normal-flux version of the Monk--Wang residual, and not as the same functional.

For fixed directions, the coefficient least-squares solution is therefore characterized directly by the exact Hermitian system
\begin{equation}\label{eq:cls}
 M(Z)c_{\rm LS}(Z)=d(Z),
 \qquad
 c_{\rm LS}(Z)=\operatorname*{arg\,min}_{c\in\C^{N_h}}\mathcal J(c,Z).
\end{equation}
In Part~B we keep the coefficient projection \eqref{eq:cls} but allow its local directions to vary.  For every fixed $Z$,
\begin{equation}\label{eq:LSdominatesJ}
 \mathcal J(c_{\rm LS}(Z),Z)
 \le \mathcal J(c_{\rm PW}(Z),Z),
\end{equation}
but this residual ordering need not agree with the $L^2$-error ordering.

For Part~A, the equality constraint $A(Z)c=f(Z)$ may be expressed with
\[
 \mathcal L(c,Z,\lambda)
 =\mathcal J(c,Z)-2\Re\{\lambda^*(A(Z)c-f(Z))\}.
\]
Stationarity in $c$ gives the adjoint equation $A^*\lambda=\partial_{\bar c}\mathcal J$.  The resulting reduced direction derivative is given below.  The adjoint term is present because the Part~A coefficients must remain on the PWDG solution manifold; it is absent from the free coefficient minimization in Part~B.

\subsection{Differentiation with respect to the directions}\label{subsec:wirtinger}
To differentiate with respect to a complex angle we use the Wirtinger derivatives \cite{HjorungnesGesbert2007}, in which $z$ and $\overline z$ are treated as independent variables for the purpose of differentiation.  For $z=\theta+i\eta$,
\[
 z=\theta+i\eta,
 \qquad
 \partial_z=\frac12(\partial_\theta-i\partial_\eta),
 \qquad
 \partial_{\overline z}=\frac12(\partial_\theta+i\partial_\eta).
\]
If $J$ is real valued, then $\partial_{\overline z}J=\overline{\partial_zJ}$.  The notation is particularly convenient here because a trial plane wave depends holomorphically on $z$, while the conjugated plane wave in the test position depends on $\overline z$.
For
\[
 q(z)=\kappa(\cos z,\sin z),
 \qquad
 q'(z)=\kappa(-\sin z,\cos z),
\]
the local Trefftz basis function
\[
 \phi_z(x)=\exp\!\big(iq(z)\cdot(x-x_K)\big)
\]
is entire in $z$.  Hence
\begin{equation}\label{eq:phi-wirt}
 \partial_z\phi_z
 =i\,q'(z)\cdot(x-x_K)\,\phi_z,
 \qquad
 \partial_{\overline z}\phi_z=0.
\end{equation}
The conjugated test function satisfies
\begin{equation}\label{eq:phibar-wirt}
 \partial_z\overline{\phi_z}=0,
 \qquad
 \partial_{\overline z}\overline{\phi_z}
 =-i\,\overline{q'(z)}\cdot(x-x_K)\,\overline{\phi_z}.
\end{equation}
The gradient and normal-trace derivatives follow by the product rule:
\begin{align}
 &\partial_z\nabla\phi_z
 =i q'(z)\phi_z+i q(z)\,\partial_z\phi_z\nonumber\\
 &\hspace{2em}=\Big(iq'(z)-q(z)\,[q'(z)\cdot(x-x_K)]\Big)\phi_z,\label{eq:gradphi-wirt}\\
 &\partial_z\partial_n\phi_z
 =i(q'(z)\cdot n)\phi_z+i(q(z)\cdot n)\partial_z\phi_z.\label{eq:nphi-wirt}
\end{align}
These tangent functions remain Trefftz because $\partial_z$ commutes with $-\Delta-\kappa^2$.

Although each plane wave is holomorphic in its own angle, the Galerkin matrix depends on both $Z$ and $\overline Z$, since $a_h$ is conjugate-linear in the test argument.  For independent direction parameters and
\[
 A_{ij}(Z,\overline Z)=a_h(\phi_{z_j},\phi_{z_i}),
 \qquad
 f_i(Z,\overline Z)=F_h(\phi_{z_i}),
\]
we obtain
\begin{align}
 &\partial_{z_\ell}A_{ij}
 =\delta_{j\ell}\,a_h(\partial_z\phi_{z_j},\phi_{z_i}),
 \label{eq:Az}\\
 &\partial_{\overline z_\ell}A_{ij}
 =\delta_{i\ell}\,a_h(\phi_{z_j},\partial_z\phi_{z_i}),
 \label{eq:Abarz}\\
 &\partial_{z_\ell}f_i=0,
 \qquad
 \partial_{\overline z_\ell}f_i
 =\delta_{i\ell}F_h(\partial_z\phi_{z_i}).
 \label{eq:f-wirt}
\end{align}
For shared direction parameters these formulas are summed over the corresponding basis functions.  Equation \eqref{eq:Az} differentiates the trial column associated with $z_\ell$, whereas \eqref{eq:Abarz} differentiates the corresponding test row.

At fixed active rank,
\[
 A(Z,\overline Z)c(Z,\overline Z)=f(Z,\overline Z).
\]
Differentiation gives two coefficient sensitivities per complex angle,
\begin{align}
 &A c_{z_\ell}
 =-A_{z_\ell}c,
 \label{eq:cz}\\
 &A c_{\overline z_\ell}
 =f_{\overline z_\ell}-A_{\overline z_\ell}c.
 \label{eq:cbarz}
\end{align}
The real sensitivities are
\begin{equation}\label{eq:real-from-wirt}
 c_{\theta_\ell}=c_{z_\ell}+c_{\overline z_\ell},
 \qquad
 c_{\eta_\ell}=i(c_{z_\ell}-c_{\overline z_\ell}).
\end{equation}
More generally, for either real parameter $\rho_\ell\in\{\theta_\ell,\eta_\ell\}$,
\begin{equation}\label{eq:sensitivity}
 A\,c_{\rho_\ell}
 =f_{\rho_\ell}-A_{\rho_\ell}c.
\end{equation}
The same factorization of $A$ may be used for all the sensitivity right-hand sides.  We differentiate the physical system $Ac=f$, and hence do not need to differentiate the graph-coordinate map $B$.  During one smooth direction step the compression matrix $C$ is fixed.  If the local rank changes, the active space is recomputed before the next step.

For the exact skeleton quadratic \eqref{eq:Jquadratic}, let $M_\rho$ and $d_\rho$ denote the derivatives induced by a real direction parameter $\rho\in\{\theta_\ell,\eta_\ell\}$.  Combining the product rule with the coefficient sensitivity \eqref{eq:sensitivity} gives the exact reduced derivative
\begin{equation}\label{eq:Jexact-gradient}
 {
 \frac{d\widehat{\mathcal J}_{\rm PW}}{d\rho}
 =2\Re\!\left[c_\rho^*(Mc-d)\right]
 +c^*M_\rho c-2\Re\!\left(c^*d_\rho\right).}
\end{equation}
Interior terms in $M_\rho$ follow by differentiating the closed edge moment \eqref{eq:edge-exp-integral}; only nonanalytic boundary-data terms require quadrature.

If many direction variables are present, solving \eqref{eq:sensitivity} separately for every parameter is unnecessary.  An adjoint equation gives all derivatives after one additional global solve.  If $g_c=\partial\mathcal J/\partial\overline c$, the Part~A stationarity equation \eqref{eq:partA-kkt-c} gives $A^*\mu=-g_c$.  With $\lambda=-\mu$,
\begin{equation}\label{eq:adjoint}
 A^*\lambda=g_c.
\end{equation}
then for any real direction parameter $\rho_\ell$ the reduced derivative can be written
\begin{equation}\label{eq:adjoint-gradient}
 \frac{d\mathcal J}{d\rho_\ell}
 =\left.\frac{\partial\mathcal J}{\partial\rho_\ell}\right|_{c}
 +2\Re\!\left[\lambda^*\big(f_{\rho_\ell}-A_{\rho_\ell}c\big)\right].
\end{equation}
After $\lambda$ has been computed, the derivative with respect to every local direction parameter is obtained from the corresponding local matrix and load derivatives.

\subsection{Part B: Trefftz least squares with variable directions}\label{subsec:jointls}
In the second formulation we remove the PWDG state equation and minimize the skeleton residual directly.  This is related to the wave-tracking least-squares approach of Amara et al. \cite{AmaraChaudhryDiazDjellouliFiedler2014}, except that we eliminate the linear coefficients by variable projection.  The problem is
\begin{equation}\label{eq:fullLS}
 {(c^\star,Z^\star)\in\operatorname*{arg\,min}_{c,Z}\mathcal J(c,Z).}
\end{equation}
For fixed $Z$, \eqref{eq:Jquadratic} is quadratic in $c$ and gives the usual linear Trefftz least-squares problem.  The difference from Monk--Wang is that the local Trefftz functions are no longer fixed, since their directions are also unknown.

\medskip\noindent\textit{Full-space first-order conditions.}
Because $M=M^*$, Wirtinger differentiation with respect to the free coefficient vector gives
\begin{equation}\label{eq:joint-c-normal}
 {\partial_{\bar c}\mathcal J=Mc-d.}
\end{equation}
For a real direction parameter $\rho$, the product rule gives, with $c$ held fixed,
\begin{equation}\label{eq:joint-z-normal}
 {\partial_\rho\mathcal J
 =c^*M_\rho c-2\Re(c^*d_\rho).}
\end{equation}
Equivalently, in complex-angle coordinates the $\bar z_\ell$ equation is obtained by differentiating the test-side factors in the exact edge moments.  The underlying basis derivative remains
\[
 \partial_z\phi_z=i\,q'(z)\cdot(x-x_K)\phi_z,
\]
so \eqref{eq:joint-z-normal} is exactly the product-rule condition expressing orthogonality of the residual functional to the directional tangent generated by $c_j\partial_z\phi_{z_j}$.  A joint stationary point therefore satisfies the coefficient projection equation and all directional tangent equations simultaneously.

\medskip\noindent\textit{Variable projection.}
Since the dependence on $c$ is linear, it is unnecessary to include the coefficients in the nonlinear iteration.  We use the variable-projection reduction of Golub and Pereyra \cite{GolubPereyra1973}.  On a fixed-rank stratum,
\begin{equation}\label{eq:vp-c}
 {M(Z)c_{\rm LS}(Z)=d(Z),}
\end{equation}
with the minimum-norm solution used if the active matrix is singular.  When $M$ is positive definite,
\begin{equation}\label{eq:vp-J}
 {
 \widehat{\mathcal J}_{\rm LS}(Z)
 =\gamma-d(Z)^*M(Z)^{-1}d(Z).}
\end{equation}
The reduced and full minimization problems have the same minimum, while the nonlinear problem contains only the direction variables.  The envelope derivative contains no coefficient sensitivity:
\begin{equation}\label{eq:vp-envelope}
 {
 \frac{d\widehat{\mathcal J}_{\rm LS}}{d\rho}
 =c_{\rm LS}^*M_\rho c_{\rm LS}
 -2\Re(c_{\rm LS}^*d_\rho).}
\end{equation}
Equation \eqref{eq:vp-envelope} is used in the exact-edge implementation.  When the residual is very small, we evaluate the objective from the positive edge contributions in \eqref{eq:edge-indicator}, since direct evaluation of \eqref{eq:vp-J} can lose accuracy through cancellation.

\medskip\noindent\textit{Krylov realization.}
When the active residual seminorm is a norm on the retained Trefftz space, $M$ is Hermitian positive definite and \eqref{eq:vp-c} can be solved directly by preconditioned conjugate gradients \cite{HestenesStiefel1952,MonkWang1999}, without normal equations.  Trace-rank compression from Section~\ref{subsec:compression} is applied if moving directions produce numerical rank loss.  A symmetric eigendecomposition is used as a small-problem fallback.

When several directions are used on each element, optimizing every angle independently can give a large nonlinear problem.  We therefore also use the structured fan
\begin{equation}\label{eq:fan}
 z_{K,j}=\mu_K+s_K\xi_j,
 \qquad j=1,\ldots,r_K,
\end{equation}
with fixed equally spaced centered nodes.  In the reported implementation, $\xi_j=-1+2(j-1)/(r_K-1)$ for $r_K>1$, while $\xi_1=0$ for $r_K=1$.  Thus $s_K$ is the half-width of the angular fan.  The chain rule is applied directly to $M_\rho$ and $d_\rho$.  The nonlinear dimension is then two fan parameters per element, independent of the number of linear coefficients.

\subsection{Frequency continuation}\label{subsec:continuation}
The numerical experiments below show that direct minimization at the target wavenumber can converge to a false local minimum.  To reduce this difficulty we use frequency continuation \cite{Watson1989}.  For
\[
 0<\kappa^{(0)}<\cdots<\kappa^{(M)}=\kappa_{\rm target},
\]
the converged directions at $\kappa^{(m)}$ initialize the solve at $\kappa^{(m+1)}$.  Blind starts at the lowest frequency use Sobol points \cite{Sobol1967}.  The $\omega=12$ transmission tests use $1,2,4,6,8,10,12$.  Only the direction variables are passed from one frequency to the next.  No auxiliary low-frequency field is postprocessed to estimate rays, as is done in the learning procedure of Fang et al. \cite{FangQianZepedaZhao2017}.

\section{Numerical Algorithms}\label{sec:algorithm}
Before giving the numerical results, we summarize the two procedures used in the computations.  Both use the same direction parameterization and the same skeleton residual.  The difference is how the coefficients are obtained.  Part~A solves the PWDG equations for each direction state, whereas Part~B solves the coefficient least-squares problem and removes it from the nonlinear iteration by variable projection.

\begin{algorithm}[H]
\caption{Direction-optimized PWDG (Part A)}
\label{alg:automatic}
\begin{algorithmic}[1]
\State Choose an admissible initial direction state and, when needed, a frequency-continuation schedule.
\For{each continuation frequency}
  \State Assemble the PWDG matrices and compress graph-small local trace modes by \eqref{eq:rankrule}.
  \State Solve the retained PWDG system in graph--Riesz coordinates.
  \State Minimize $\widehat{\mathcal J}_{\rm PW}(Z)$ over the active directions using \eqref{eq:Jexact-gradient}.
  \State Use the converged directions to initialize the next frequency.
\EndFor
\State \Return the PWDG field, directions, effective trace ranks, $\kappa_2(A)$, and $\kappa_{\GR}$.
\end{algorithmic}
\end{algorithm}

\begin{algorithm}[H]
\caption{Exact-edge variable-projection Trefftz least squares (Part B)}
\label{alg:varpro}
\begin{algorithmic}[1]
\State Choose an initial direction state and, when needed, a continuation schedule.
\Repeat
  \State Assemble the exact skeleton matrix $M(Z)$ by \eqref{eq:edge-exp-integral} and assemble $d(Z)$ and $\gamma$ using boundary-data quadrature only when required.
  \State Solve $M(Z)c=d(Z)$ by preconditioned CG on a full-rank active space.
  \State Assemble $M_\rho$ and $d_\rho$ from differentiated exponential moments and evaluate \eqref{eq:vp-envelope}.
  \State Apply a bounded nonlinear step to the direction variables only.
\Until{the objective or direction step satisfies the stopping criterion}
\State \Return $Z$, $c_{\rm LS}(Z)$, and the positive edge sum $\mathcal J=\sum_e\eta_e^2$.
\end{algorithmic}
\end{algorithm}

The nonlinear objective in both algorithms is nonconvex.  In examples where a direct solve at the final wavenumber is unreliable, we use frequency continuation.  We also keep changes of numerical rank outside the smooth direction step: after an accepted state the local trace ranks are recomputed and, if necessary, the active space is changed.

\section{Numerical experiments}\label{sec:numerics}

\subsection{Implementation details and derivative checks}
Unless stated otherwise, all computations in this section are carried out in IEEE double precision on triangular meshes.  For the manufactured square-domain examples we take $\Gamma_R=\varnothing$, so Proposition~\ref{prop:exactJ} gives $\mathcal J$ as the squared DG/graph error.  The plane-wave products on interior edges, including those used in $M$ and in the direction derivatives, are evaluated from \eqref{eq:edge-exp-integral}.  Numerical quadrature is used only for boundary terms for which a closed form is not available.  In the transmission problem the wavenumber is piecewise constant and continuity is imposed across $y=0$; the interior penalties use the arithmetic face scale \eqref{eq:xi-interface}.  We take $\alpha=\beta=1/2$ unless another value is stated.  These parameters do not enter the local rank metric \eqref{eq:localmetric}.  We begin the double-precision calculations with $\tau_{\rm rank}=10^{-12}$ and revisit this choice in the high-$p$ DtN experiment.

We report
\[
 E_{L^2}=\frac{\|u-u_h\|_{L^2(\Omega)}}{\|u\|_{L^2(\Omega)}}.
\]
As a check on the implementation, the analytic direction derivatives were compared with centered finite differences at points that were not stationary.  Differentiation of the exact exponential moments agrees with finite differences at approximately $10^{-9}$--$10^{-10}$ relative accuracy in the transmission tests.  The original PWDG matrix and coefficient-sensitivity checks remain below $6\times10^{-9}$.

\paragraph{Nonlinear solver and constraints.}
Part~A uses trust-region nonlinear least squares with analytic residual derivatives \cite{More1978}.  Angular variables are represented modulo $2\pi$, with $|\eta_j|\le1.8$.  After each accepted state the trace Gramians are recomputed and compression is applied before the next fixed-rank solve.  Part~B uses the same framework after variable projection.  Stopping tests use the scaled residual and step norms.  The reported value of $N_{\rm fev}$ is the number of nonlinear residual evaluations.  Since the derivatives are analytic, no additional finite-difference residual evaluations are included.

\subsection{Three hidden plane waves}\label{subsec:threewave}
We first consider an exact solution consisting of three plane waves,
\begin{equation}\label{eq:threewave}
 u(x)=\sum_{j=1}^3A_j e^{i\kappa d_j\cdot x},
\end{equation}
where the three directions are $17^\circ$, $123^\circ$, and $251^\circ$.  These angles are used to generate the boundary data but are not supplied to the nonlinear solver.  The direction search starts from the same Sobol low-discrepancy angles \cite{Sobol1967}, $43.929^\circ$, $130.738^\circ$, and $269.353^\circ$, for every wavenumber.  Table~\ref{tab:threewave} uses a uniform mesh with 72 triangles and 216 PWDG coefficients.  For all three wavenumbers the nonlinear iteration recovers the three directions to the digits shown.  After this has occurred, the exact solution belongs to the discrete Trefftz space and the remaining $L^2$ error is at the level of roundoff.

\begin{table}[htbp]
\centering
\caption{Three-wave recovery on a uniform mesh with 72 triangles.}
\label{tab:threewave}
\begin{tabular}{|c|r|r|r|r|r|}
\hline
$\kappa$ & uniform $p=3$ error & optimized error & $N_{\rm fev}$ & $\kappa_2(A)$ & $\kappa_{\GR}$\\
\hline
4  & $1.1875\times10^{-1}$ & $1.25\times10^{-15}$ & 6  & 106.69 & 7.49\\
8  & $3.7541\times10^{-1}$ & $1.19\times10^{-15}$ & 8  & 37.61  & 5.98\\
12 & $5.5874\times10^{-1}$ & $2.01\times10^{-15}$ & 12 & 40.35  & 5.47\\
\hline
\end{tabular}
\end{table}

The preceding calculation is an exact recovery problem, but the corresponding nonlinear minimization is not globally easy.  For a second hidden triple $(41^\circ,177^\circ,303^\circ)$, a direct $\kappa=8$ solve from a generic start converges to $(279.080^\circ,143.976^\circ,9.238^\circ)$ with $E_{L^2}=1.1513$ and $\mathcal J=50.43$.  Continuation from $\kappa=1$ to $8$ recovers the exact triple and gives $E_{L^2}=2.17\times10^{-15}$ in 21 cumulative residual evaluations.

The corresponding direct solve required 67 residual evaluations.  In this case continuation reaches a different local minimum and also uses fewer residual evaluations.  The comparison is only one initialization and does not give a success probability for the nonlinear solver.  It does, however, show why the analysis in Section~\ref{subsec:local-theory} is local and why continuation is used in the more difficult experiments below.

\subsection{Transmission at \texorpdfstring{$\omega=12$}{omega=12}}\label{subsec:transmission}
We next consider the transmission problem on $(-1,1)^2$.  The purpose of this example is to check the complex-angle parameterization in the two cases that occur at an interface: a propagating transmitted wave and an evanescent one.  We take $n_1=2$ below $y=0$ and $n_2=1$ above it, so that $\kappa_j=\omega n_j$ with $\omega=12$.  There are two plane-wave components in the lower material and one in the upper material.  The exact transmission solution is used only to prescribe the boundary data and to measure the error.  A direct solve at $\omega=12$ can converge to a false minimum, so we use the continuation sequence
\[
 1,2,4,6,8,10,12.
\]

For $69^\circ$ incidence the recovered directions are
\begin{equation}\label{eq:69angles}
 69^\circ,\qquad291^\circ,\qquad44.2143563895^\circ,
\end{equation}
with imaginary components at roundoff.  Tangential phase matching predicts
\begin{equation}\label{eq:Snell-validation}
 \theta_t=\arccos\!\left(\frac{\kappa_1\cos69^\circ}{\kappa_2}\right)
 =44.2143563895^\circ,
\end{equation}
which agrees with the recovered direction in the upper medium to the digits shown.

At $29^\circ$ incidence, total internal reflection gives
\begin{equation}\label{eq:29angles}
 z_1=29^\circ+O(10^{-16})i,\qquad z_2=331^\circ+O(10^{-16})i,
\end{equation}
and
\begin{equation}\label{eq:evangle}
 z_3=0+1.15828058554i.
\end{equation}
The physical decay parameter is
\begin{equation}\label{eq:eta-validation}
 \eta_{\rm tr}=\operatorname{arccosh}\!\left(\frac{\kappa_1\cos29^\circ}{\kappa_2}\right)
 =1.1582805855,
\end{equation}
The computed imaginary part agrees with \eqref{eq:eta-validation}.  Thus the same direction search finds the decaying branch without a critical-angle test or a separate evanescent ansatz.

The real parts of the two solutions are shown in Fig.~\ref{fig:transmission-fields}.  The mesh plays essentially no approximation role once the three directions have been recovered.  This is worth displaying explicitly.  Figure~\ref{fig:transmission-meshes} shows the 8- and 32-triangle meshes used in Table~\ref{tab:trans-mesh}.  Already on the 8-triangle mesh the relative $L^2$ errors are $1.41\times10^{-15}$ in the propagating case and $2.00\times10^{-15}$ in the evanescent case.  Refining to 32 triangles does not improve these numbers in any meaningful way; both calculations are already at working precision.  This is exactly what should happen: after the correct directions are found, the manufactured transmission field belongs to the piecewise Trefftz space on either mesh.

The conditioning tells a different story.  For the 8-triangle evanescent calculation, $\kappa_2(A)=1.22\times10^9$, whereas $\kappa_{\GR}=2.25$.  Thus the coarse mesh is already sufficient for approximation, but the choice of coefficient coordinates still has a large effect on the algebraic problem.
\begin{figure}[htbp]
\centering
\begin{minipage}[t]{0.43\textwidth}
\centering
\includegraphics[width=\textwidth]{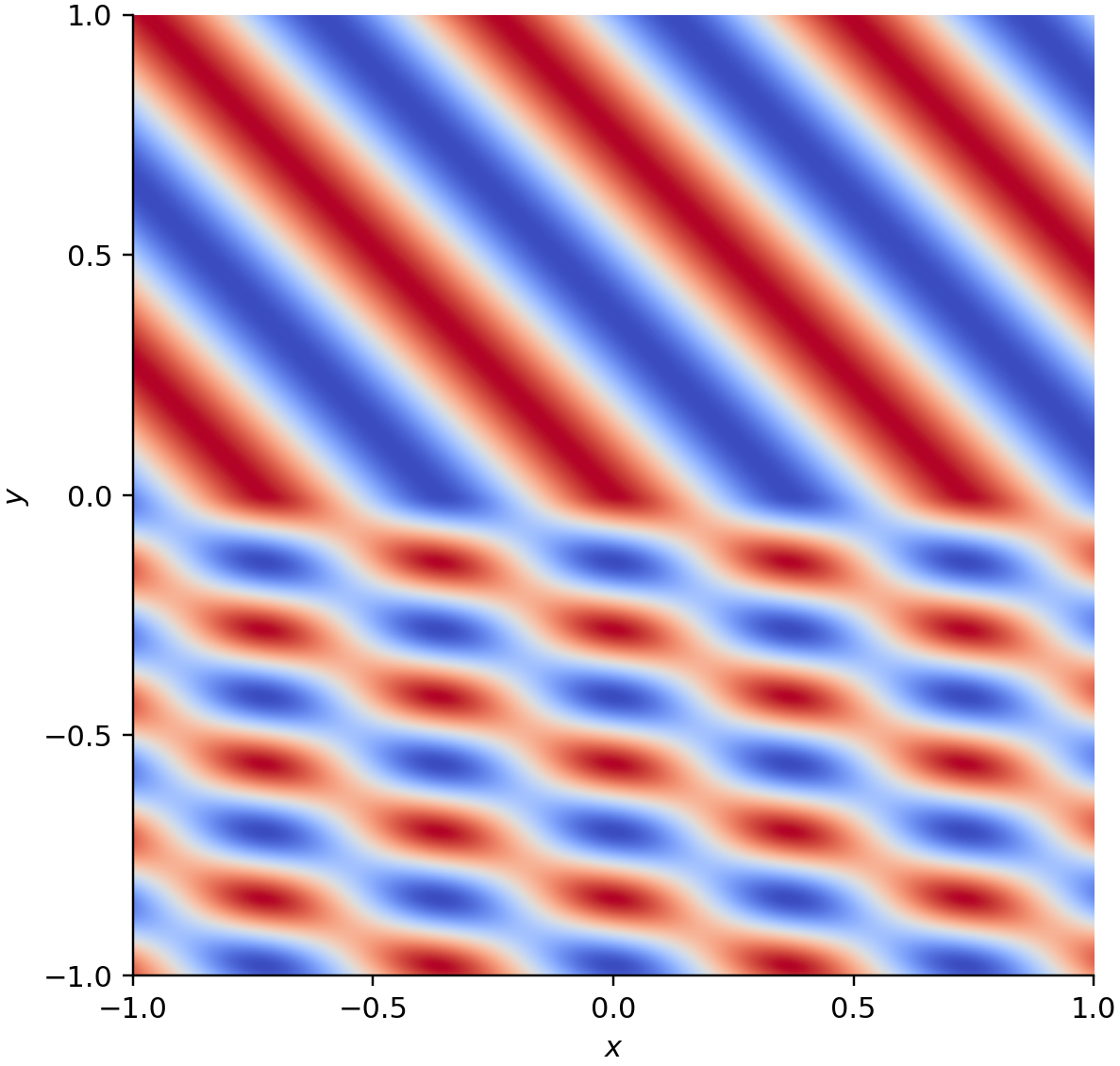}\\[-1.5mm]
(a) $69^\circ$ propagating transmission.
\end{minipage}\hfill
\begin{minipage}[t]{0.43\textwidth}
\centering
\includegraphics[width=\textwidth]{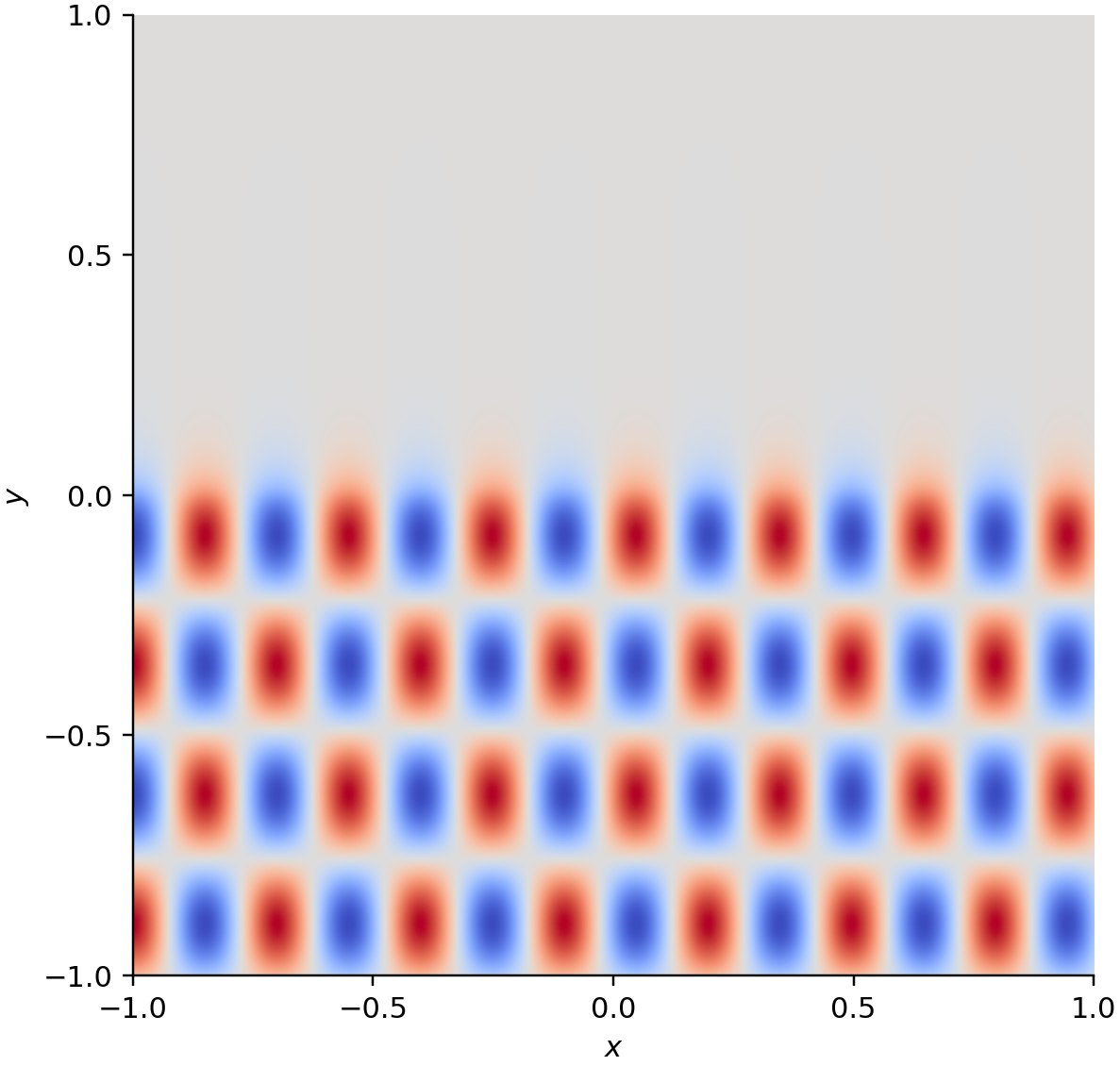}\\[-1.5mm]
(b) $29^\circ$ evanescent transmission.
\end{minipage}
\caption{Real parts of the $\omega=12$ transmission fields.}
\label{fig:transmission-fields}
\end{figure}
\begin{table}[htbp]
\centering
\caption{Part~A transmission recovery at $\omega=12$ on uniform square meshes.  $N_{\rm fev}$ is cumulative over continuation.}
\label{tab:trans-mesh}
\begin{tabular}{|c|c|r|r|r|r|r|}
\hline
triangles & incidence & coefficients & $N_{\rm fev}$ & $E_{L^2}$ & $\kappa_2(A)$ & $\kappa_{\GR}$\\
\hline
8 & $69^\circ$ & 12 & 15 & $1.41\times10^{-15}$ & 10.06 & 1.84\\
32 & $69^\circ$ & 48 & 15 & $1.13\times10^{-15}$ & 22.95 & 2.92\\
8 & $29^\circ$ & 12 & 21 & $2.00\times10^{-15}$ & $1.22\times10^9$ & 2.25\\
32 & $29^\circ$ & 48 & 20 & $4.26\times10^{-15}$ & $4.28\times10^4$ & 3.52\\
\hline
\end{tabular}
\end{table}

\begin{figure}[htbp]
\centering
\includegraphics[width=0.76\textwidth]{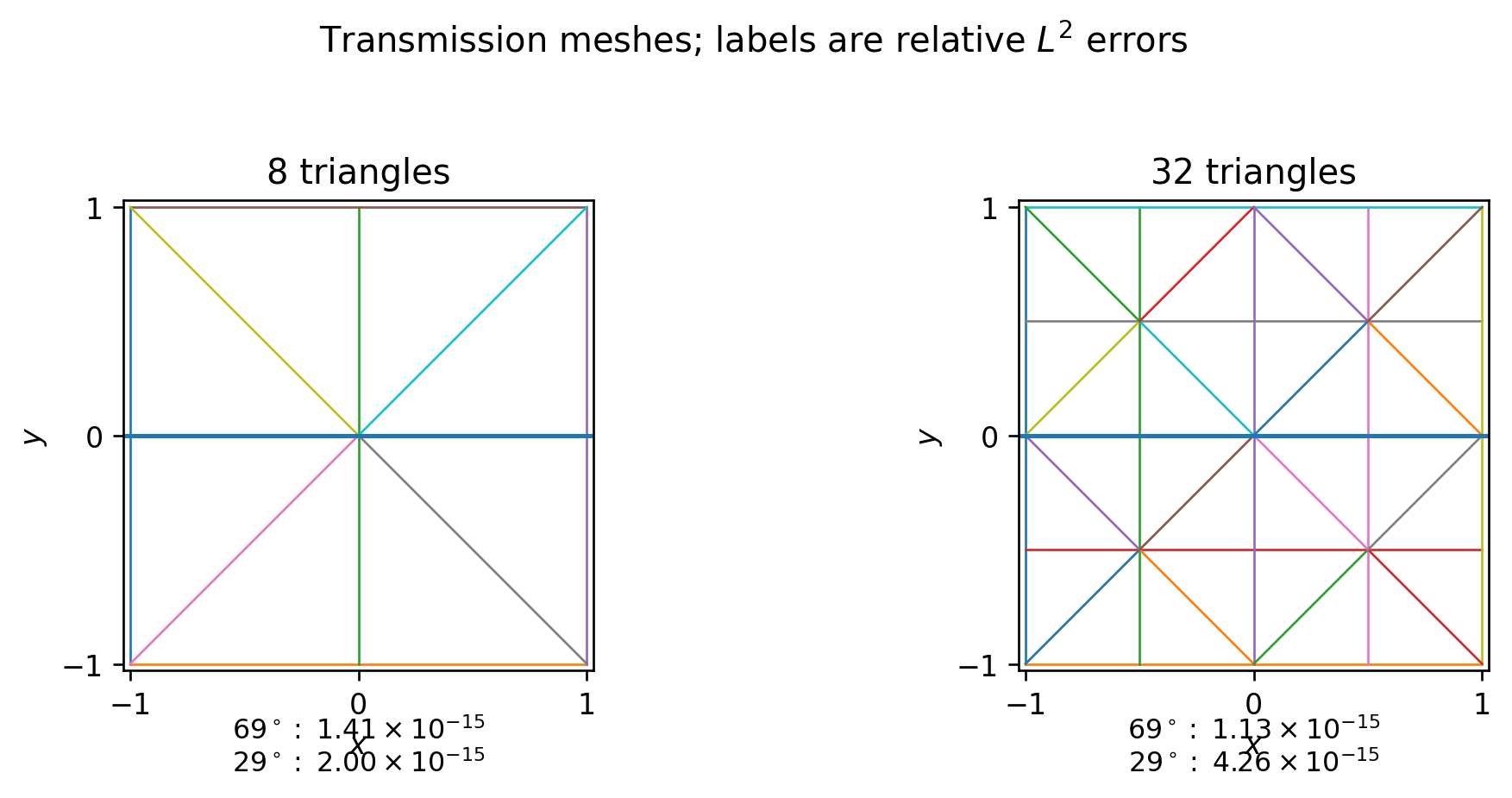}
\caption{The two uniform transmission meshes used in Table~\ref{tab:trans-mesh}.  The interface is $y=0$.  The numbers printed below each mesh are the relative $L^2$ errors for the $69^\circ$ and $29^\circ$ calculations.  The 8-triangle mesh is already at machine precision in both cases; refinement to 32 triangles has no observable approximation effect.}
\label{fig:transmission-meshes}
\end{figure}
From Table~\ref{tab:trans-mesh} we also observe that the raw condition number of the $29^\circ$ calculation decreases on the finer mesh.  This is consistent with the smaller elementwise exponential variation of the evanescent basis.  The graph--Riesz condition number remains small on both meshes.

\subsection{Exact circular DtN boundary: Hankel waves and the limitation of a sparse local space}\label{subsec:dtn-hankel}
We next return to the circular artificial boundary used in the DtN--PWDG experiments of \cite{KapitaMonk2018}.  We take a sound-soft circle of radius $a=0.5$ and a concentric artificial boundary $r=R$ with $R=1$.  As in the earlier work, both circular boundaries are represented by exact curved edges so that there is no polygonal geometry error.  With the time dependence $e^{+i\omega t}$, outgoing cylindrical modes are represented by Hankel functions of the second kind.

For $v(R,\theta)=\sum_{m\in\mathbb Z}v_m e^{im\theta}$, the truncated DtN map is
\begin{equation}\label{eq:dtn-map-new}
 S_N v=\sum_{|m|\le N} k\frac{H_m^{(2)\prime}(kR)}{H_m^{(2)}(kR)}v_m e^{im\theta},
 \qquad
 v_m=\frac{1}{2\pi}\int_0^{2\pi}v(R,\phi)e^{-im\phi}\,d\phi .
\end{equation}
On $\Gamma_R$ we use the DtN fluxes from \cite{KapitaMonk2018},
\begin{equation}\label{eq:dtn-flux-new}
\begin{aligned}
&\widehat u_h=u_h-\frac{\delta}{ik}\big(\partial_nu_h-S_Nu_h\big),\\
&ik\widehat\sigma_h=S_Nu_h\,n-\frac{\delta}{ik}S_N^\star\big(\nabla_hu_h-S_Nu_h\,n\big).
\end{aligned}
\end{equation}
For direction selection the boundary part of the skeleton objective is replaced by the nonlocal residual
\begin{equation}\label{eq:dtn-residual-new}
 \mathcal J_{R,N}(v_h)=\frac{\delta_D}{k}\|\partial_n v_h-S_Nv_h\|_{L^2(\Gamma_R)}^2,
\end{equation}
where $\delta_D$ denotes the optimization weight and is kept distinct from the flux parameter $\delta$.  The DtN term couples all outer boundary edges, so the corresponding part of $M(Z)$ is a global boundary block rather than an edge-local contribution.

We first use the DtN problem to expose a limitation of direction movement.  We ask whether a small local fan can follow the phase of an outgoing cylindrical field and, if it can, whether this is enough to give a small field error.  We consider the two exact solutions
\[
 u(x)=H_0^{(2)}(k|x-x_s|),\qquad k=8,
\]
with $x_s=0$ and with an off-center source $x_s$ lying inside the removed disk.  In both cases the exact field satisfies the homogeneous Helmholtz equation in the annulus and the exact DtN condition on $r=R$.  The arrows used below represent the learned plane-wave wave vectors $q$, equivalently the local spatial phase-gradient directions.  This distinction is important for the present time convention: since $e^{+i\omega t}H_0^{(2)}(kr)\sim e^{i\omega t-ikr}/\sqrt r$, an outgoing cylindrical wave has $q$ directed toward the source, whereas its physical energy propagates outward.  Thus the plotted arrows are not energy-flow arrows.  We use one radial layer, eight exact curved sectors, and a three-wave local fan with offsets $(-18^\circ,0,18^\circ)$; only the fan center is moved.  Frequency continuation uses $k=1,2,4,8$, and $N=16$ at the target frequency.

The same exact annular mesh is used throughout the DtN experiments.  For reference, Fig.~\ref{fig:dtn-meshes-used} shows this mesh together with the exact scattered field for the standard sound-soft disk problem with incident wave $u^{\rm inc}=e^{ikx}$.  With the present $e^{+i\omega t}$ convention, the scattered field is
\begin{equation}\label{eq:disk-scattering-series}
\begin{aligned}
u^{\rm scat}(r,\theta)
={}&-\frac{J_0(ka)}{H_0^{(2)}(ka)}H_0^{(2)}(kr)\\
&-2\sum_{m=1}^{\infty} i^m
\frac{J_m(ka)}{H_m^{(2)}(ka)}H_m^{(2)}(kr)\cos(m\theta).
\end{aligned}
\end{equation}
The field shown in Fig.~\ref{fig:dtn-meshes-used} uses $M=20$ terms, which is more than sufficient for the visualization at $k=8$.  This truncation is used only for the plot; the reported numerical errors use the converged reference series in the computational code.  The Hankel-source calculations below are retained as local phase-recovery diagnostics.  The high-$p$ experiment in Section~\ref{subsec:dtn-precision} returns to this plane-wave-incidence scattering problem.

\begin{figure}[htbp]
\centering
\includegraphics[width=0.98\textwidth]{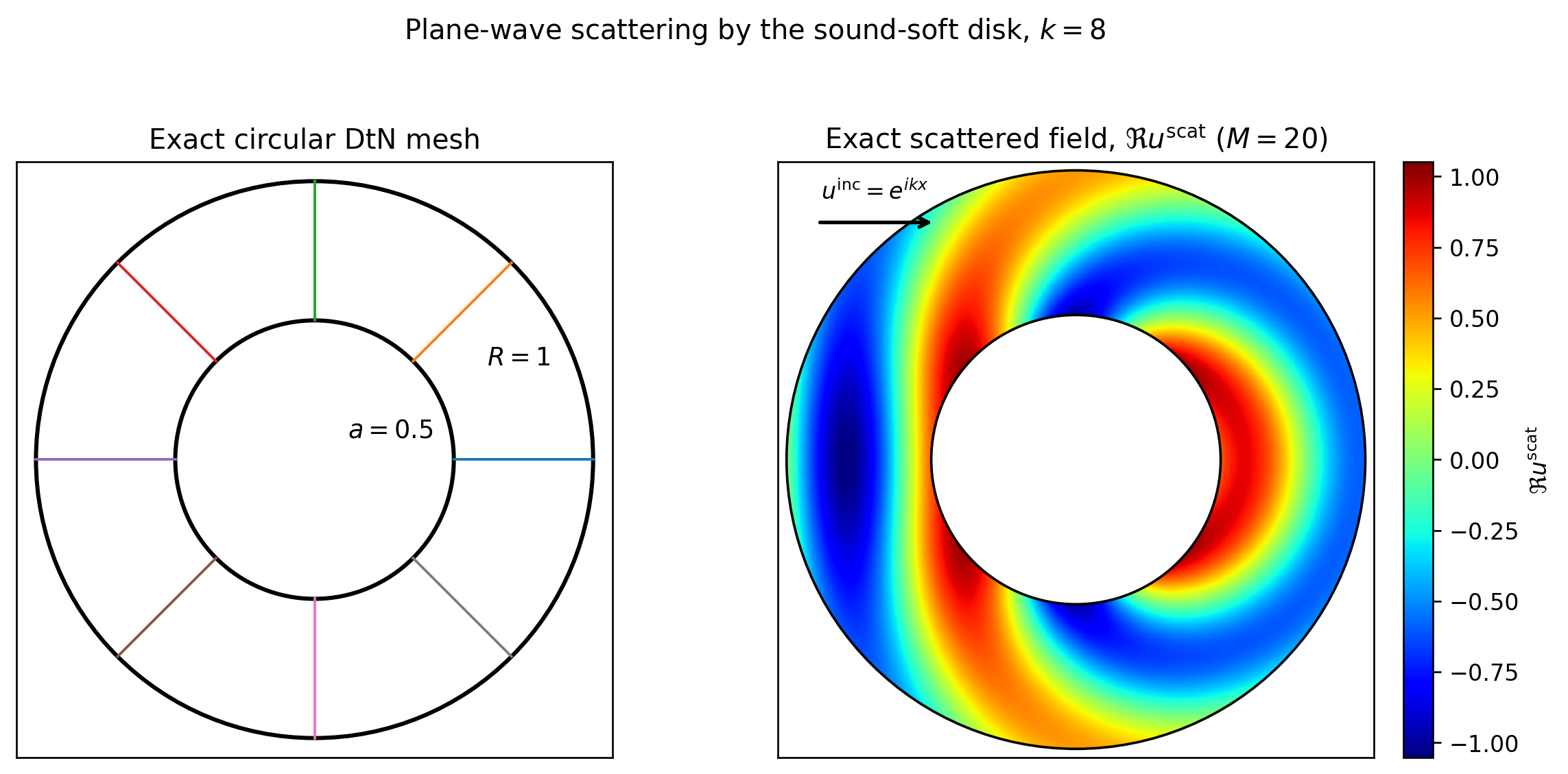}
\caption{DtN geometry and plane-wave scattering reference field at $k=8$.  Left: the exact annular mesh used throughout the DtN experiments, with one radial layer and eight exact curved sectors.  Right: the real part of the exact scattered field for the incident plane wave $u^{\rm inc}=e^{ikx}$, evaluated from \eqref{eq:disk-scattering-series} with $M=20$ terms for display.  The $M=20$ truncation is used only for this plot and does not alter any of the numerical results reported below.}
\label{fig:dtn-meshes-used}
\end{figure}

\begin{table}[htbp]
\centering
\caption{DtN Hankel tests on the exact annulus.  The angular errors compare the learned outer-ring fan center with the exact local wave-vector direction and are used only as a posteriori diagnostics.}
\label{tab:dtn-hankel}
\begin{tabular}{|l|r|r|r|r|r|}
\hline
case & uniform $p=3$ $E_{L^2}$ & adaptive $E_{L^2}$ & mean angle error & max. angle error & $\kappa_{\GR}$\\
\hline
centered source & $7.653\times10^{-1}$ & $7.663\times10^{-2}$ & $7\times10^{-6}\!{}^\circ$ & $7\times10^{-6}\!{}^\circ$ & 1.307\\
off-center source & $7.626\times10^{-1}$ & $9.407\times10^{-2}$ & $2.86^\circ$ & $4.87^\circ$ & 1.339\\
\hline
\end{tabular}
\end{table}

\begin{figure}[htbp]
\centering
\begin{minipage}[t]{0.48\textwidth}
\centering
\includegraphics[width=\textwidth]{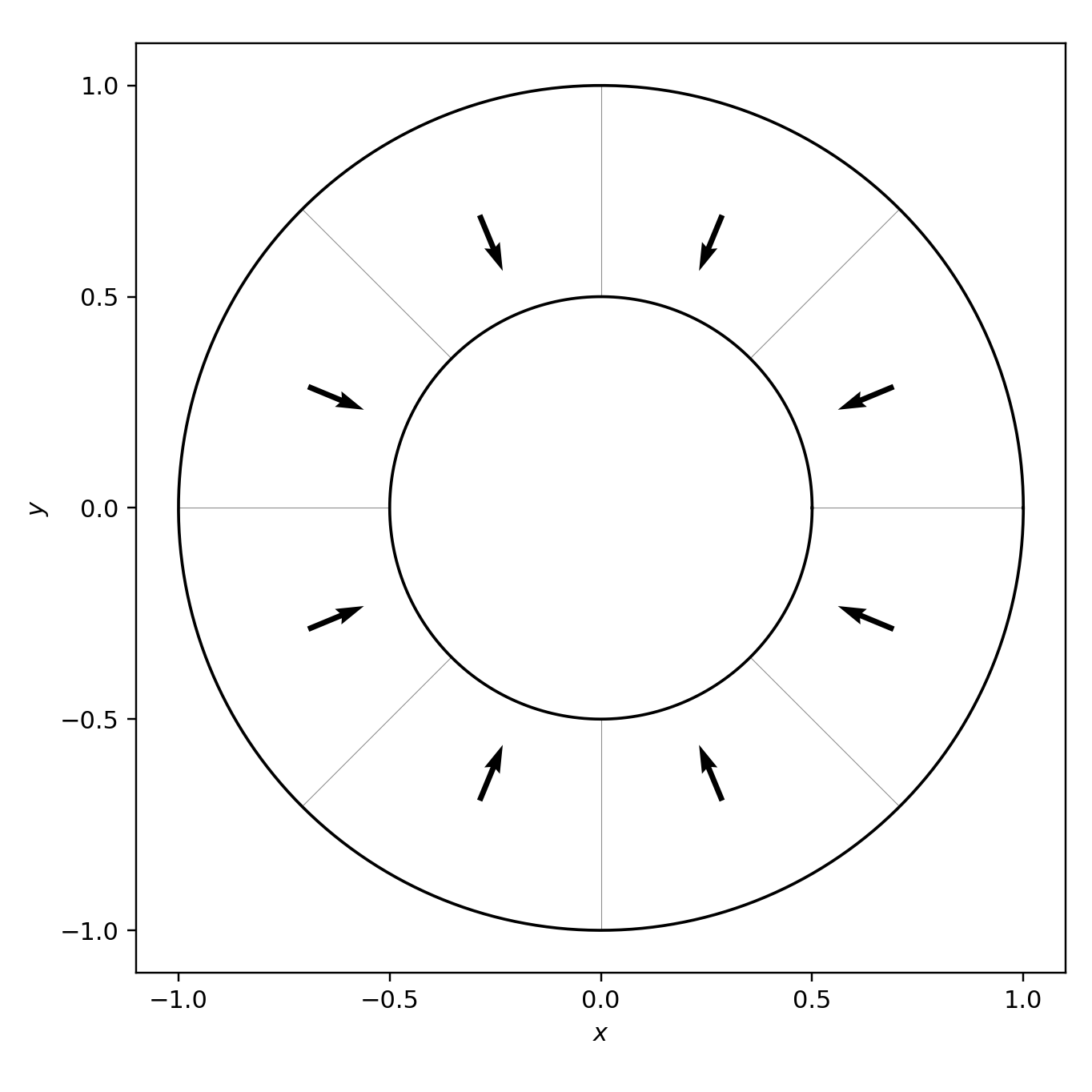}\\[-1.5mm]
(a) Centered cylindrical wave.
\end{minipage}\hfill
\begin{minipage}[t]{0.48\textwidth}
\centering
\includegraphics[width=\textwidth]{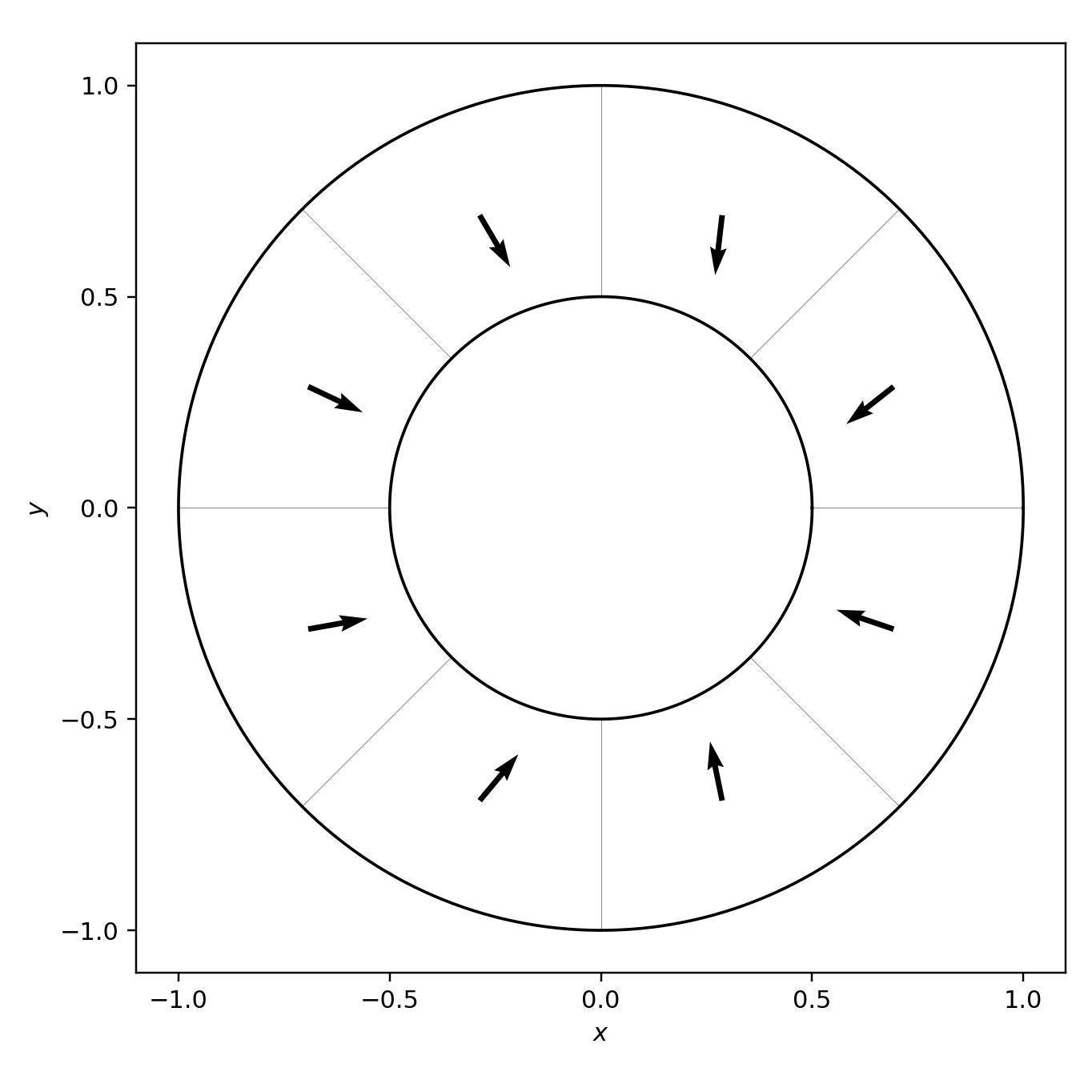}\\[-1.5mm]
(b) Off-center cylindrical wave.
\end{minipage}
\caption{Learned plane-wave wave vectors on the exact circular DtN boundary.  The arrows show $q$, equivalently the local spatial phase-gradient direction, not the physical energy-propagation direction.  With the $e^{+i\omega t}$ convention and outgoing $H^{(2)}$ waves, $q$ points toward the source while energy propagates outward.  Thus, in the centered case the recovered arrows point radially inward to numerical accuracy; in the off-center case they follow the source-centered phase direction rather than the normal of the artificial circle.}
\label{fig:dtn-hankel-directions}
\end{figure}

From Table~\ref{tab:dtn-hankel} and Fig.~\ref{fig:dtn-hankel-directions} we observe that the direction recovery itself is good.  For the centered source the fan centers are radially inward to the digits shown.  For the off-center source they follow the source-centered phase rather than the normal to the artificial circle.  Nevertheless, the relative $L^2$ error remains of order $10^{-1}$.  The conclusion is simple: recovering the dominant local phase direction is not enough for a cylindrical field.  A three-wave fan does not represent the curvature of the wavefront or the radial amplitude variation.  We next increase the local dimension and ask whether the remaining high-$p$ floor is approximation error or numerical error.

\subsection{Finite-precision, compression and trace-rank barriers for DtN--PWDG}\label{subsec:dtn-precision}
Our next DtN experiment examines the high-$p$ calculation.  In \cite{KapitaMonk2018} the relative error decreased rapidly with $p$ and then stopped decreasing because of numerical instability.  Here we investigate the source of this behavior.  We use the same exact circular geometry, but we do not try to reproduce the earlier mesh point for point.  Instead, we fix the present discretization and compare the raw coefficient system, the graph--Riesz system, different trace cutoffs, and different arithmetic precisions.  In this way we can distinguish approximation error from numerical effects.

We now return to the sound-soft disk scattering problem shown in Fig.~\ref{fig:dtn-meshes-used}, with incident field $u^{\rm inc}=e^{ikx}$ and $u^{\rm scat}=-u^{\rm inc}$ on $r=a$.  We take the exact annulus $0.5<r<1$, $k=8$, one radial layer and eight exact curved sectors.  We fix the DtN truncation at $N=40$ and use 44-point Gauss--Legendre quadrature on the circular and radial edges.  The local directions are uniformly distributed and we use
\[
 p=19,23,25,27,29,31,33,35,37,39.
\]
The same nominal spaces are solved first in the raw coefficient basis in IEEE double precision.  We then apply the local Cauchy-trace normalization of Section~\ref{subsec:compression} and solve in graph--Riesz coordinates, again in double precision.  A companion calculation uses 80-bit extended arithmetic.  For $p=25,27,29,31,33$ we additionally assemble and factor the full system in IEEE binary128 arithmetic.  No high-precision values are interpolated or extrapolated.

Results are shown in Fig.~\ref{fig:dtn-all-methods-floor}, with selected values in Table~\ref{tab:dtn-precision}.  Up to $p=27$ the different calculations give essentially the same error.  At $p=29$ the raw double-precision error increases instead of decreasing.  The graph--Riesz calculation does not show this large increase, which indicates that the first loss of accuracy is caused by the coefficient basis.  Increasing the arithmetic precision gives a further improvement.  For example, at $p=33$ the binary128 error is $2.56\times10^{-5}$, compared with approximately $2.3\times10^{-4}$ for graph--Riesz in double precision.  Thus graph--Riesz normalization improves the double-precision calculation, but it does not by itself recover the smaller modes that are visible at higher precision.

\begin{table}[htbp]
\centering
\caption{Selected points from the audited DtN precision continuation.  The graph--Riesz column uses double precision and a near-machine trace cutoff.}
\label{tab:dtn-precision}
\begin{tabular}{|r|r|r|r|r|}
\hline
$p$ & raw double $E_{L^2}$ & GR double $E_{L^2}$ & 80-bit $E_{L^2}$ & binary128 $E_{L^2}$\\
\hline
25 & $4.506\times10^{-4}$ & $4.507\times10^{-4}$ & $4.507\times10^{-4}$ & $4.507\times10^{-4}$\\
27 & $2.417\times10^{-4}$ & $2.394\times10^{-4}$ & $2.394\times10^{-4}$ & $2.394\times10^{-4}$\\
29 & $1.191\times10^{-3}$ & $2.347\times10^{-4}$ & $1.065\times10^{-4}$ & $1.063\times10^{-4}$\\
31 & $3.340\times10^{-4}$ & $2.358\times10^{-4}$ & $8.302\times10^{-5}$ & $5.420\times10^{-5}$\\
33 & $3.087\times10^{-4}$ & $2.341\times10^{-4}$ & $9.274\times10^{-5}$ & $2.555\times10^{-5}$\\
\hline
\end{tabular}
\end{table}

\begin{figure}[htbp]
\centering
\includegraphics[width=0.80\textwidth]{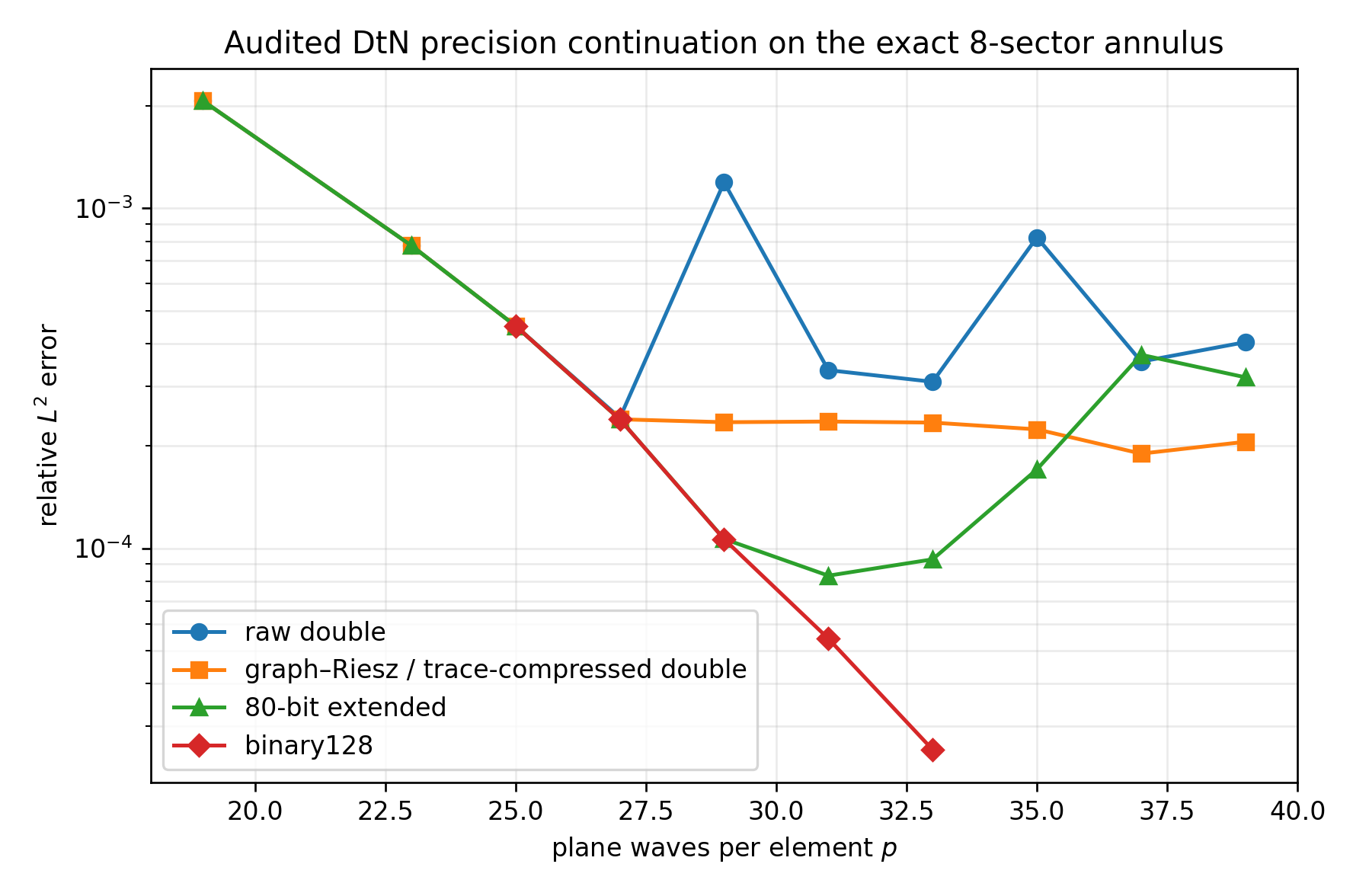}
\caption{Audited precision continuation on the exact eight-sector DtN annulus.  Raw double, graph--Riesz double and 80-bit arithmetic contain ten computed points.  Binary128 assembly and factorization were carried out only at $p=25,27,29,31,33$.  The raw double basis fails first; graph--Riesz normalization stabilizes the trace space still visible in double precision; higher arithmetic resolves additional modes and moves the $p$-floor.}
\label{fig:dtn-all-methods-floor}
\end{figure}

We next investigate the effect of the trace cutoff.  We keep the trace eigensolve in double precision and decrease $\tau_{\rm rank}$.  Results for three values of $p$ are given in Table~\ref{tab:dtn-tau}.  From the table we see that decreasing the cutoff continues to add useful modes.  For $p=35$, the relative error decreases from $4.27\times10^{-4}$ when $\tau_{\rm rank}=10^{-14}$ to $6.66\times10^{-5}$ when all trace modes having a computed positive eigenvalue are retained.  Hence the choice $10^{-12}$ is too large for this high-$p$, high-accuracy calculation.

\begin{table}[htbp]
\centering
\caption{Effect of decreasing the trace cutoff in the double-precision graph--Riesz calculation.  ``all $+$'' means that every trace eigenvector with a computed positive eigenvalue is retained.}
\label{tab:dtn-tau}
\begin{tabular}{|r|c|r|r|}
\hline
$p$ & $\tau_{\rm rank}$ & effective DOFs & $E_{L^2}$\\
\hline
29 & $10^{-12}$ & 184 & $7.90\times10^{-4}$\\
29 & $10^{-15}$ & 216 & $2.36\times10^{-4}$\\
29 & $10^{-17}$ & 224 & $1.79\times10^{-4}$\\
29 & all $+$ & 225 & $1.70\times10^{-4}$\\
\hline
31 & $10^{-15}$ & 216 & $2.35\times10^{-4}$\\
31 & $10^{-16}$ & 220 & $1.94\times10^{-4}$\\
31 & $10^{-17}$ & 233 & $1.58\times10^{-4}$\\
31 & all $+$ & 236 & $1.33\times10^{-4}$\\
\hline
35 & $10^{-14}$ & 200 & $4.27\times10^{-4}$\\
35 & $10^{-16}$ & 223 & $1.75\times10^{-4}$\\
35 & $10^{-17}$ & 248 & $8.13\times10^{-5}$\\
35 & all $+$ & 249 & $6.66\times10^{-5}$\\
\hline
\end{tabular}
\end{table}

Decreasing the cutoff in double precision eventually stops being sufficient because the smallest eigenvalues of the trace Gramian are no longer computed reliably.  To check this directly, we recomputed the local trace Gramian and its eigendecomposition at 50 decimal digits for $p=29,31,35$.  In all three cases every eigenvalue is positive.  The smallest relative eigenvalues are
\[
 5.01\times10^{-13},\qquad 1.00\times10^{-14},\qquad 2.53\times10^{-18}.
\]
Consequently, at $p=35$ the fixed value $\tau_{\rm rank}=10^{-12}$ retains only 27 of 35 local trace modes, $10^{-16}$ retains 33, and $10^{-20}$ retains all 35.  Figure~\ref{fig:dtn-trace-rank} shows this directly.

\begin{figure}[htbp]
\centering
\includegraphics[width=0.72\textwidth]{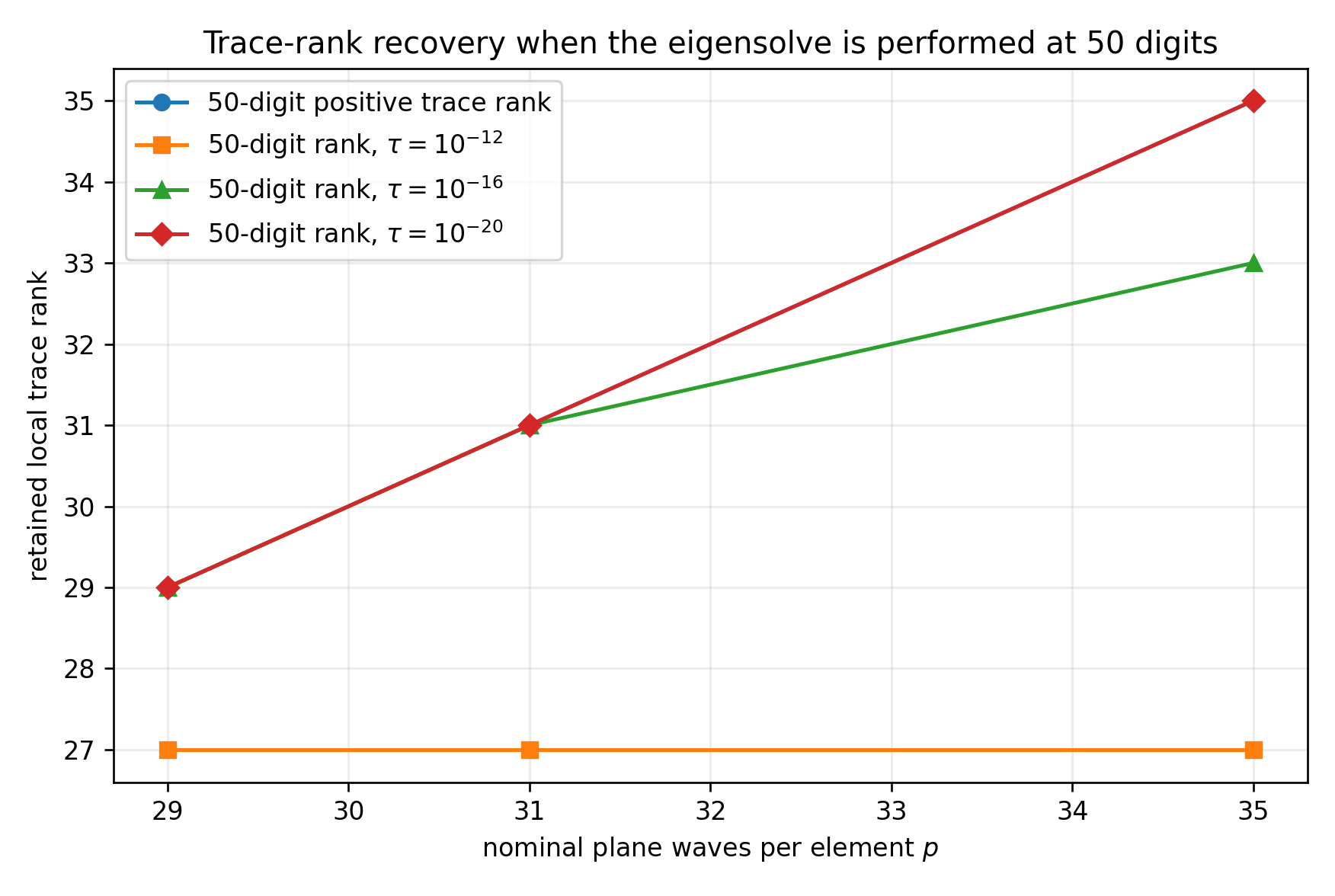}
\caption{Local trace rank when the Gramian and its eigendecomposition are computed at 50 decimal digits.  The high-precision spectrum is positive in all three cases.  A fixed cutoff $10^{-12}$ therefore creates an artificial high-$p$ rank ceiling; decreasing the cutoff in proportion to the working precision restores the smaller but still resolvable trace modes.}
\label{fig:dtn-trace-rank}
\end{figure}

From these experiments we draw two conclusions.  First, $\tau_{\rm rank}$ should not be regarded as a fixed universal value.  It must be chosen with the working precision and the accuracy required from the calculation.  Second, graph--Riesz normalization and trace truncation address different numerical problems.  Graph--Riesz normalization improves the conditioning on the retained space, while the cutoff decides which trace modes are retained.  Once the trace spectrum itself reaches the double-precision noise level, the eigensolve must also be carried out at higher precision.  We do not pursue a full arbitrary-precision compressed global solve here.

\subsection{Finite directional complexity and the limit of exact recovery}\label{subsec:finite-capacity}
The Hankel calculation has broad directional content, so it does not isolate the ability of the nonlinear search to recover a finite set of rays.  We now remove this ambiguity.  We prescribe the exact solution to be a finite sum of plane waves and increase the number of directions one at a time.  The exact field therefore remains in a finite Trefftz space for every value of $M$.  Any loss of exact recovery can then be attributed to the direction search or the numerical linear algebra, not to curved-wavefront approximation.  On $\Omega=(-1,1)^2$ with $\kappa=12$ we take
\begin{equation}\label{eq:finite-sum}
 u_M(x)=\sum_{j=1}^{M}a_j\exp(i\kappa d_j\cdot x),
 \qquad d_j=(\cos\theta_j,\sin\theta_j),
\end{equation}
with $|a_j|=1$.  The nested direction sequence is
\[
\begin{aligned}
 &(\theta_j)_{j=1}^{20}=(17,123,251,68,191,315,39,158,286,340,\\
 &\hspace{8.8em}92,224,5,145,273,327,54,178,235,301)^\circ,
\end{aligned}
\]
and the coefficient phases $a_j=e^{i\phi_j}$ are
\[
\begin{aligned}
 &(\phi_j)_{j=1}^{20}=(0,0.37,-0.51,0.81,-1.04,1.33,-1.52,0.22,1.71,-0.93,\\
 &\hspace{8.8em}0.58,-1.22,1.48,-0.31,0.96,-1.73,0.11,1.18,-0.77,1.91).
\end{aligned}
\]
The phases are fixed only to avoid an artificially symmetric sum.  Exact Dirichlet data are imposed on the boundary of a uniform 8-triangle mesh.

For each $M$ we use $p=M$ directions shared by all elements.  Hence the adaptive and uniform calculations have the same $8M$ coefficients.  The uniform directions are $11^\circ+360^\circ j/M$.  For the adaptive calculation we use continuation in $M$.  After the $M-1$ problem has converged, the new direction is chosen by testing a fixed angular dictionary with the skeleton residual, subject to a minimum separation from the directions already present.  We then move all $M$ directions by nonlinear least squares.  When ten directions are active we refine the birth dictionary from 36 to 72 angles; this only refines the search grid and does not prescribe the number of plane waves.  The exact directions in \eqref{eq:finite-sum} are not used by either ENRICH or MOVE.  They are used only afterward to measure the angular error.  All computations in this subsection are in IEEE double precision, including the $M=20$ calculation and the control experiment below.

The result is shown in Table~\ref{tab:finite-capacity} and Fig.~\ref{fig:finite-capacity}.  For $M=1,\ldots,19$ all directions are recovered to essentially machine precision and the relative $L^2$ error stays between $10^{-15}$ and $2\times10^{-14}$.  The comparison with the uniform space is the important point: both calculations use exactly $8M$ coefficients.  At $M=10$ the uniform error is $0.830$, at $M=15$ it is $0.539$, and at $M=19$ it is still $2.09\times10^{-2}$.  Thus the large gain is due to learning the correct directions, not to increasing the dimension of the approximation space.

\begin{table}[htbp]
\centering
\caption{Finite-direction capacity experiment at $\kappa=12$ on 8 triangles.  Both methods use $p=M$ directions and $8M$ coefficients.  The angle error is the maximum circular matching error between the recovered and exact direction sets.}
\label{tab:finite-capacity}
\begin{tabular}{|r|r|r|r|r|}
\hline
$M$ & adaptive $E_{L^2}$ & uniform $E_{L^2}$ & max. angle error & $\kappa_{\GR}$\\
\hline
10 & $2.91\times10^{-14}$ & $8.30\times10^{-1}$ & $7.38\times10^{-13}\!{}^\circ$ & --\\
15 & $4.28\times10^{-15}$ & $5.39\times10^{-1}$ & $1.97\times10^{-13}\!{}^\circ$ & 8.73\\
17 & $6.98\times10^{-15}$ & $1.16\times10^{-1}$ & $1.74\times10^{-12}\!{}^\circ$ & 16.08\\
18 & $2.16\times10^{-14}$ & $8.92\times10^{-2}$ & $2.56\times10^{-12}\!{}^\circ$ & 18.01\\
19 & $1.99\times10^{-14}$ & $2.09\times10^{-2}$ & $3.66\times10^{-12}\!{}^\circ$ & 32.40\\
20 & $1.62\times10^{-3}$  & $7.86\times10^{-3}$ & $28.99^\circ$ & 75.25\\
\hline
\end{tabular}
\end{table}

The final point, $M=20$, behaves differently.  This value is not an optimal dimension; it is simply the first value in this continuation for which the automatic birth step fails.  The residual test chooses $117.5^\circ$ for the new ray, whereas the twentieth exact direction is $301^\circ$.  MOVE decreases the residual but remains in a false basin.  The maximum angular error is then $28.99^\circ$ and $E_{L^2}=1.62\times10^{-3}$.  The uniform calculation at the same dimension has error $7.86\times10^{-3}$.  Thus the adaptive calculation is still better, but the large advantage seen for $M\le19$ is lost.  We also checked whether this is a double-precision linear algebra effect.  The final PWDG solve has relative algebraic residual $2.49\times10^{-14}$, with $\kappa_2(A)=1.65\times10^7$ and $\kappa_{\GR}=75.25$.  Since the field error is much larger than the algebraic residual, the loss of accuracy is caused by the nonlinear direction search rather than by the linear solve.

We next repeat only the twentieth step in order to check that the twenty-ray space can still represent the exact solution.  The first nineteen recovered directions are kept and the new ray is initialized at $302.5^\circ$, which is $1.5^\circ$ from the exact direction.  This calculation is again carried out in double precision.  After nine residual evaluations we obtain $E_{L^2}=1.11\times10^{-14}$, $\mathcal J=1.57\times10^{-24}$ and a maximum angular error of $3.20\times10^{-11}$ degrees.  Here $\kappa_2(A)=1.60\times10^4$ and $\kappa_{\GR}=35.54$.  This control shows that the twenty-ray Trefftz space has not reached an approximation or precision barrier.  The failure of the automatic calculation is caused by the birth decision.

\begin{figure}[htbp]
\centering
\includegraphics[width=0.80\textwidth]{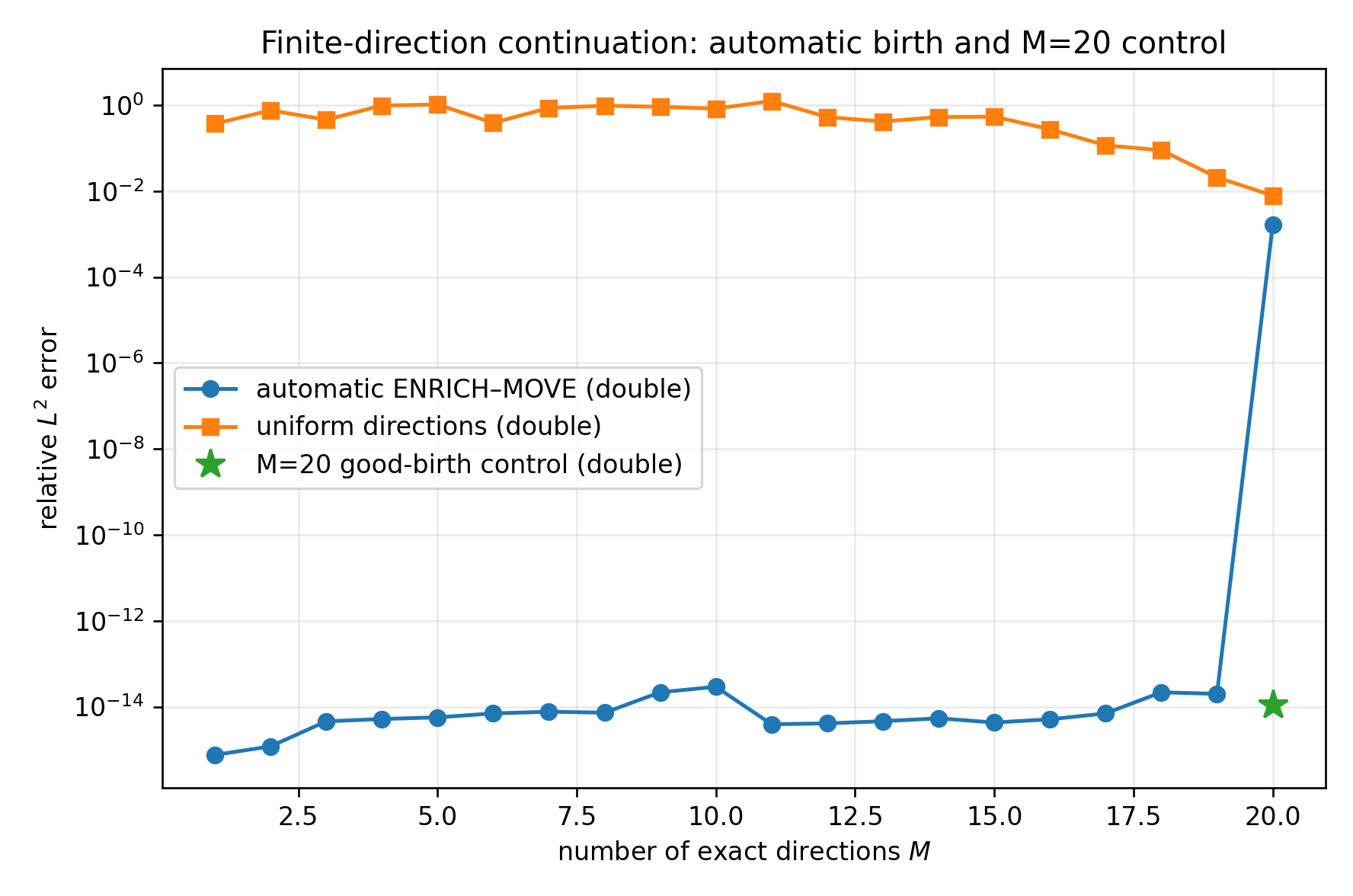}
\caption{Continuation in the number of exact directions.  Every point in this figure is computed in IEEE double precision.  The automatic ENRICH--MOVE path remains at roundoff through $M=19$ and enters a false basin at $M=20$.  The isolated star at $M=20$ is the double-precision control in which the twentieth ray is born at $302.5^\circ$; the same twenty-ray space then returns to roundoff.  This separates failure of the automatic birth decision from a finite-precision or representability barrier.}
\label{fig:finite-capacity}
\end{figure}

From Fig.~\ref{fig:finite-capacity} we see a clear change of behavior.  When the solution contains a finite set of identifiable directions, the adapted space can be much more accurate than a uniform angular space of the same dimension.  As more directions are added, the uniform space becomes a better generic approximation while the nonlinear birth problem becomes more difficult.  We do not attach special significance to the number nineteen.  It is the last exact-recovery point for this nested family and this continuation strategy.  The main observation is that the largest gain is obtained when the directional content is sparse enough to be identified.

\subsection{Trianglewise direction and \texorpdfstring{$h$}{h}-adaptivity}\label{subsec:hybrid-adapt}
Our final adaptive experiment combines direction changes with mesh refinement on the L-shaped corner-singular and crossing-wave example.  We compare the number of Trefftz coefficients required by the combined method with residual based $h$-refinement alone.  We measure the error by the relative graph norm
\[
 E_{\mathcal G}=
 \frac{\|u-u_h\|_{\mathcal G,h}}{\|u\|_{\mathcal G,h}}.
\]
Both methods start from the same 24-triangle mesh with five equally spaced plane waves per element.  The standard loop and its extension here are
\[
\begin{array}{c}
\text{SOLVE}\to\text{ESTIMATE}\to\text{MARK}\to\text{REFINE},\\[-1mm]
\text{SOLVE}\to\text{ESTIMATE}\to\text{MARK}\to\text{TEST}\to\text{ACT}\to\text{SOLVE}.
\end{array}
\]
In the ESTIMATE step we compute the element quantities $\eta_K^2$ from \eqref{eq:edge-indicator}.  Each interior edge contribution is divided equally between the two adjacent triangles, while a boundary contribution is assigned to the adjacent triangle.  This step uses only the current PWDG solution.  No trial direction or trial mesh problem is solved during ESTIMATE.

For the present all-Dirichlet problem these indicators have an exact interpretation.  Proposition~\ref{prop:exactJ} gives
\[
 \sum_K\eta_K^2=\mathcal J(u_h)=\|u-u_h\|_{\mathcal G,h}^2,
\]
so their sum is the squared graph error of the current PWDG solution.  For a mixed Dirichlet--impedance problem we would instead use the reliability estimate from Section~\ref{subsec:mixed-control}.  The role of ESTIMATE is therefore the same as in a standard adaptive method: it identifies the elements carrying the largest residual contribution.  Only after MARK has selected these elements do we use TEST to decide whether to move directions, add directions, or refine the mesh.

For consistency with the finite-direction study, we use the common nominal cap $p_K\le20$ during the direction-learning stage.  After cycle 14 the local fans are frozen and the remaining iterations use residual based $h$-refinement only.  The pure $h$ calculation starts from the same mesh and uses the same D\"orfler parameter $\theta=0.4$ and the same refinement rule, but keeps five fixed equally spaced directions on every element.

In the first stage, MARK applies D\"orfler marking \cite{Dorfler1996} to the values $\eta_K^2$.  For each marked element, TEST considers two MOVE trials, obtained by rotating the current fan by $\pm15^\circ$, and at most six ENRICH trials.  Candidate directions are required to be separated by at least $7^\circ$.  If $J_K^{\rm move}$ and $J_K^{\rm enrich}$ are the best trial residuals and $\eta_K^2$ the current local contribution, the action scores are
\begin{equation}\label{eq:hybrid-gains}
 G_K^{\rm move}=\frac{J-J_K^{\rm move}}{\eta_K^2},
 \qquad
 G_K^{\rm enrich}=\frac{J-J_K^{\rm enrich}}{\eta_K^2}.
\end{equation}
ACT chooses the stronger admissible directional action when
\[
 G_K^{\rm move}\ge0.20
 \qquad\hbox{or}\qquad
 G_K^{\rm enrich}\ge0.40,
\]
and otherwise chooses REFINE.  Children inherit the parent direction set.

The main stages of the calculation are listed in Table~\ref{tab:hybrid-threshold}.  At the end of the direction-learning stage the relative graph error has decreased from $0.8471$ to $0.1278$ using 884 coefficients.  The mean local dimension is $6.23$ and the largest local dimension is 8, so the common cap $p_K\le20$ is inactive and does not affect this trajectory.  With the learned fans then held fixed, the hybrid calculation first satisfies $E_{\mathcal G}<0.05$ with 3329 coefficients.  The pure $h$ calculation requires 11290 coefficients to reach the same error level.

\begin{table}[htbp]
\centering
\caption{Threshold-driven hybrid benchmark on the L-shaped domain.}
\label{tab:hybrid-threshold}
\begin{tabular}{|l|r|r|r|r|}
\hline
state & cycle & triangles & coefficients & $E_{\mathcal G}$\\
\hline
initial mesh & 0 & 24 & 120 & 0.8471\\
hybrid after direction stage & 14 & 142 & 884 & 0.1278\\
hybrid at threshold & 19 & 538 & 3329 & 0.0404\\
pure $h$ at threshold & 16 & 2258 & 11290 & 0.0462\\
\hline
\end{tabular}
\end{table}

\begin{figure}[htbp]
\centering
\includegraphics[width=0.68\textwidth]{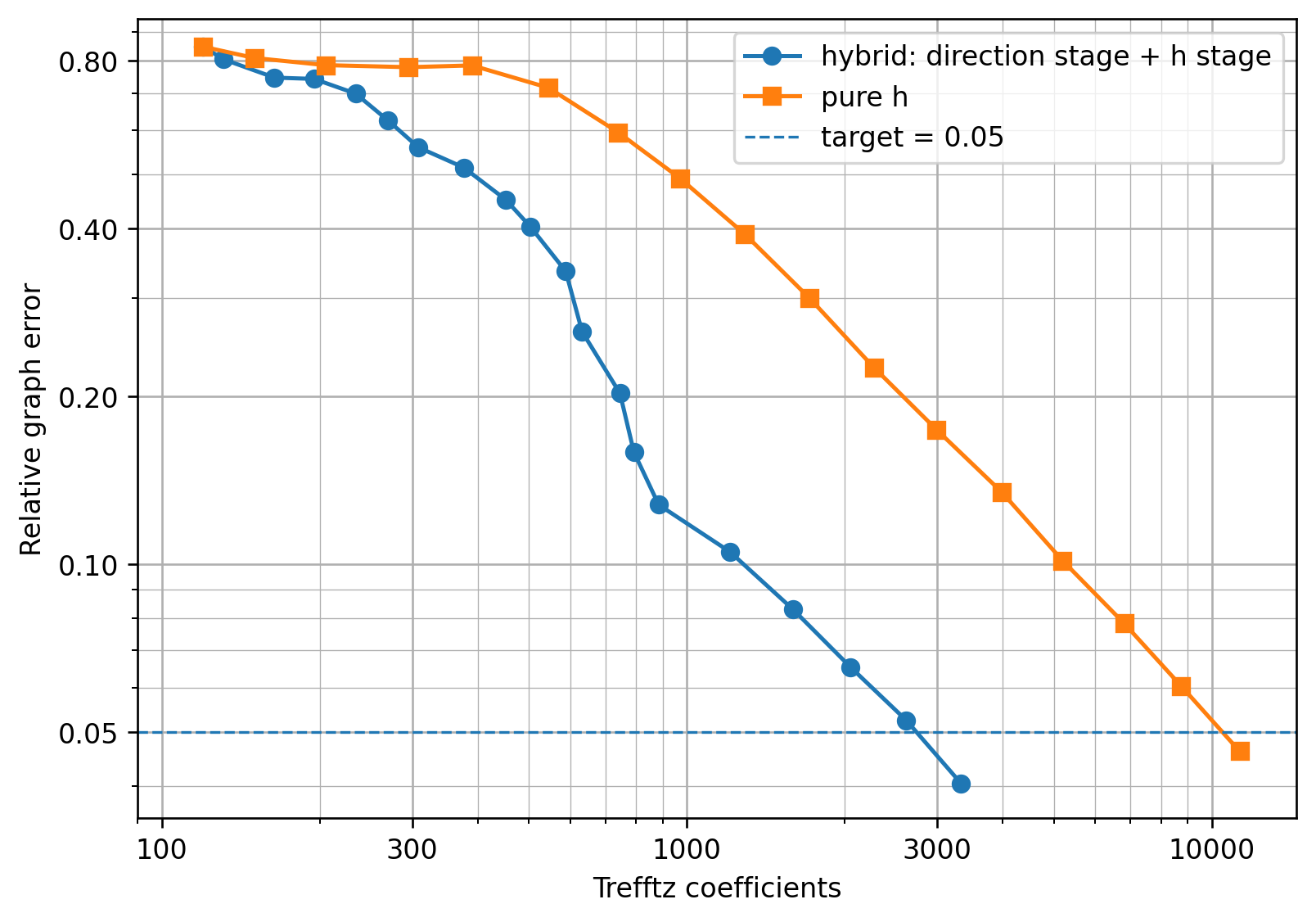}
\caption{Relative graph error versus Trefftz coefficients for the threshold-driven hybrid benchmark.  The hybrid method uses a direction-learning stage followed by pure $h$-refinement with the learned local fans frozen.}
\label{fig:hybrid-threshold}
\end{figure}

The complete histories are shown in Fig.~\ref{fig:hybrid-threshold}.  At the first point below $5\%$ relative graph error, the hybrid calculation uses about $70.5\%$ fewer coefficients than pure $h$-refinement.  During the first stage, MOVE and ENRICH are selected where changing the local directional space gives the larger reduction in the residual, while the mesh is still refined near the singular corner.  The learned directions are then frozen.  From the last part of the figure we see that the reduction in coefficients is retained as the mesh is further refined.

\paragraph{TEST-stage work.}
The present TEST implementation is not cheaper than pure $h$-adaptivity in elapsed time.  Every marked element may require two MOVE trials and as many as six ENRICH trials, and each accepted trial is evaluated through a global solve.  Table~\ref{tab:hybrid-threshold} and Fig.~\ref{fig:hybrid-threshold} should therefore be interpreted as a comparison of approximation spaces, not as a timing comparison.

\subsection{Variable-projection check}\label{subsec:joint-results}
Part~B is intended as an alternative nonlinear formulation rather than as a different approximation space.  For the exact finite plane-wave recovery problems, Parts~A and B therefore give the same field once the correct directions have been found.  We use the Hankel field only as a nonzero-residual check of the variable-projection implementation.  At $\kappa=8$ on the eight-element mesh, one real direction per element initialized from the centroid-radial field gives $\mathcal J=4.5604\times10^{-1}$ and $E_{L^2}=3.5462\times10^{-1}$; a blind Sobol initialization converges to a poorer basin with $\mathcal J=1.5792$ and $E_{L^2}=9.8983\times10^{-1}$.  With four local rays the same variable-projection calculation gives $\mathcal J=2.5410\times10^{-2}$ and $E_{L^2}=6.9700\times10^{-2}$.  These checks confirm the implementation and the same nonconvexity already seen in Part~A; they are not used as evidence for a separate approximation advantage.

\section{Conclusion}\label{sec:conclusion}
In this paper we have considered PWDG approximations in which the propagation directions are determined during the computation.  In Part~A the PWDG equations are retained and the skeleton residual is minimized over the directions.  In Part~B the Galerkin constraint is removed and the linear coefficients are eliminated by variable projection.  Complex angles allow the same local family to contain propagating and evanescent waves.

The numerical experiments show that the usefulness of direction adaptation depends strongly on the directional content of the solution.  For the finite plane-wave sums, the result is particularly clear.  In the nested family considered here, ENRICH--MOVE continuation recovers all directions to roundoff for $M=1,\ldots,19$, while a uniform angular space with the same number of coefficients remains much less accurate.  At $M=20$ the automatic birth step enters a false basin.  Repeating the same step with a nearby initial direction, still in double precision, again gives roundoff error.  Thus this failure is caused by the nonlinear identification of the new direction and not by the twenty-ray approximation space.  We do not interpret nineteen as a universal limit; the experiment is intended to show the regime in which direction adaptation gives its largest gain.

The Hankel calculation gives the complementary result.  On the exact circular DtN boundary the learned fan follows the local phase direction accurately, including for an off-center source, but the field error remains much larger.  A small number of plane waves cannot exactly represent the curvature and amplitude variation of a cylindrical field.  Hence learning the correct local phase direction is not by itself sufficient when the directional content is broad.

We also revisited the high-$p$ numerical instability observed in the earlier DtN--PWDG calculations.  The experiments separate three effects.  The raw coefficient basis loses accuracy first.  Graph--Riesz normalization improves the calculation on the retained trace space.  Decreasing $\tau_{\rm rank}$ then restores additional trace modes.  Finally, when the smallest trace eigenvalues are computed at 50 decimal digits, the modes that appeared nonpositive in double precision are positive.  This shows that $\tau_{\rm rank}$ should be regarded as a numerical parameter depending on the working precision and required accuracy, rather than as a fixed constant.

For straight edges, the products of the local plane waves and their direction derivatives reduce to exact exponential moments.  In the all-Dirichlet case we also have
\[
 \mathcal J(v_h)=\|u-v_h\|_{\mathcal G,h}^2,
\]
so that the residual used to move the directions is the same quantity used to measure the graph error.  For mixed Dirichlet--impedance problems we obtained the corresponding $L^2$ residual bound under the broken Trefftz stability assumption of Section~\ref{subsec:mixed-control}.

The transmission experiment shows that the complex-angle formulation recovers both the propagating Snell direction and an evanescent transmitted wave.  In the combined adaptive experiment, the usual SOLVE--ESTIMATE--MARK loop is supplemented by a TEST step which chooses between MOVE, ENRICH and REFINE on marked elements.  The relative graph error falls below $5\%$ with 3329 Trefftz coefficients, compared with 11290 coefficients for pure $h$-refinement.  This is a comparison of approximation spaces; the present TEST step uses several global trial solves and is not yet a cheaper algorithm in elapsed time.

The nonlinear analysis given here is local.  On a fixed-rank neighborhood of an identifiable zero-residual solution we obtain quadratic growth of the reduced objective and stability of the field with respect to the direction parameters.  This agrees with the numerical behavior: once the iteration is in the correct basin the directions converge rapidly, while a poor birth or a direct solve at the final frequency may converge to another minimum.  An important next step is therefore to replace the present global trial solves by cheaper local tests and to improve the continuation and birth strategies when several directional components are present.

\section*{Declarations}
\textbf{Competing interests.} The author declares no competing interests.

\textbf{Data and code availability.} The numerical experiments are generated by the accompanying research code and tabulated data supplied with the manuscript.  The computational results reported in the paper are not extrapolated from fitted curves.

\textbf{Author contributions.} S. Kapita developed the methodology, analysis, implementation, numerical experiments, and manuscript.


\begin{thebibliography}{99}
\bibitem{CessenatDespres1998}
O.~Cessenat and B.~Despr\'es.
\newblock Application of an ultra weak variational formulation of elliptic PDEs to the two-dimensional Helmholtz problem.
\newblock \emph{SIAM J. Numer. Anal.}, 35(1):255--299, 1998.

\bibitem{MonkWang1999}
P.~Monk and D.-Q.~Wang.
\newblock A least-squares method for the Helmholtz equation.
\newblock \emph{Comput. Methods Appl. Mech. Engrg.}, 175:121--136, 1999.

\bibitem{GolubPereyra1973}
G.~H.~Golub and V.~Pereyra.
\newblock The differentiation of pseudo-inverses and nonlinear least squares problems whose variables separate.
\newblock \emph{SIAM J. Numer. Anal.}, 10(2):413--432, 1973.

\bibitem{GittelsonHiptmairPerugia2009}
C.~J. Gittelson, R.~Hiptmair, and I.~Perugia.
\newblock Plane wave discontinuous Galerkin methods: analysis of the $h$-version.
\newblock \emph{ESAIM Math. Model. Numer. Anal.}, 43(2):297--331, 2009.

\bibitem{HiptmairMoiolaPerugia2011}
R.~Hiptmair, A.~Moiola, and I.~Perugia.
\newblock Plane wave discontinuous Galerkin methods for the 2D Helmholtz equation: analysis of the $p$-version.
\newblock \emph{SIAM J. Numer. Anal.}, 49(1):264--284, 2011.

\bibitem{HiptmairMoiolaPerugia2016}
R.~Hiptmair, A.~Moiola, and I.~Perugia.
\newblock Plane wave discontinuous Galerkin methods: exponential convergence of the $hp$-version.
\newblock \emph{Found. Comput. Math.}, 16(3):637--675, 2016.

\bibitem{HiptmairMoiolaPerugiaSurvey2016}
R.~Hiptmair, A.~Moiola, and I.~Perugia.
\newblock A survey of Trefftz methods for the Helmholtz equation.
\newblock In \emph{Building Bridges: Connections and Challenges in Modern Approaches to Numerical Partial Differential Equations}, Lecture Notes in Computational Science and Engineering 114, pp.~237--279. Springer, Cham, 2016.

\bibitem{KapitaMonk2018}
S.~Kapita and P.~Monk.
\newblock A plane wave discontinuous Galerkin method with a Dirichlet-to-Neumann boundary condition for the scattering problem in acoustics.
\newblock \emph{J. Comput. Appl. Math.}, 327:208--225, 2018.

\bibitem{KapitaMonkWarburton2015}
S.~Kapita, P.~Monk, and T.~Warburton.
\newblock Residual-based adaptivity and PWDG methods for the Helmholtz equation.
\newblock \emph{SIAM J. Sci. Comput.}, 37(3):A1525--A1553, 2015.

\bibitem{CongreveHoustonPerugia2019}
S.~Congreve, P.~Houston, and I.~Perugia.
\newblock Adaptive refinement for $hp$-version Trefftz discontinuous Galerkin methods for the homogeneous Helmholtz problem.
\newblock \emph{Adv. Comput. Math.}, 45(1):361--393, 2019.

\bibitem{CongreveGedickePerugia2019}
S.~Congreve, J.~Gedicke, and I.~Perugia.
\newblock Numerical investigation of the conditioning for plane wave discontinuous Galerkin methods.
\newblock In F.~A.~Radu, K.~Kumar, I.~Berre, J.~M.~Nordbotten, and I.~S.~Pop (eds.), \emph{Numerical Mathematics and Advanced Applications -- ENUMATH 2017}, Lecture Notes in Computational Science and Engineering 126, pp.~493--500. Springer, Cham, 2019.

\bibitem{BarucqBendaliDiazTordeux2021}
H.~Barucq, A.~Bendali, J.~Diaz, and S.~Tordeux.
\newblock Local strategies for improving the conditioning of the plane-wave ultra-weak variational formulation.
\newblock \emph{J. Comput. Phys.}, 441:110449, 2021.

\bibitem{AgrawalHoppe2017}
A.~Agrawal and R.~H.~W. Hoppe.
\newblock Optimization of plane wave directions in plane wave discontinuous Galerkin methods for the Helmholtz equation.
\newblock \emph{Port. Math.}, 74(1):69--89, 2017.

\bibitem{AmaraChaudhryDiazDjellouliFiedler2014}
M.~Amara, S.~Chaudhry, J.~Diaz, R.~Djellouli, and S.~L. Fiedler.
\newblock A local wave tracking strategy for efficiently solving mid- and high-frequency Helmholtz problems.
\newblock \emph{Comput. Methods Appl. Mech. Engrg.}, 276:473--508, 2014.

\bibitem{FangQianZepedaZhao2017}
J.~Fang, J.~Qian, L.~Zepeda-N\'u\~nez, and H.~Zhao.
\newblock Learning dominant wave directions for plane wave methods for high-frequency Helmholtz equations.
\newblock \emph{Res. Math. Sci.}, 4:9, 2017.

\bibitem{LamQian2019}
C.~Y.~Lam and J.~Qian.
\newblock Numerical microlocal analysis by fast Gaussian wave packet transforms and application to high-frequency Helmholtz problems.
\newblock \emph{SIAM J. Sci. Comput.}, 41(5):A2717--A2746, 2019.

\bibitem{HuWang2021}
Q.~Hu and Z.~Wang.
\newblock A plane wave method based on approximate wave directions for two dimensional Helmholtz equations with large wave numbers.
\newblock arXiv:2107.09797, 2021.

\bibitem{ParolinHuybrechsMoiola2023}
E.~Parolin, D.~Huybrechs, and A.~Moiola.
\newblock Stable approximation of Helmholtz solutions in the disk by evanescent plane waves.
\newblock \emph{ESAIM Math. Model. Numer. Anal.}, 57(6):3499--3536, 2023.

\bibitem{BetckeTrefethen2005}
T.~Betcke and L.~N.~Trefethen.
\newblock Reviving the method of particular solutions.
\newblock \emph{SIAM Rev.}, 47(3):469--491, 2005.

\bibitem{BarnettBetcke2008}
A.~H.~Barnett and T.~Betcke.
\newblock Stability and convergence of the method of fundamental solutions for Helmholtz problems on analytic domains.
\newblock \emph{J. Comput. Phys.}, 227(14):7003--7026, 2008.

\bibitem{MoiolaSpence2019}
A.~Moiola and E.~A.~Spence.
\newblock Acoustic transmission problems: wavenumber-explicit bounds and resonance-free regions.
\newblock \emph{Math. Models Methods Appl. Sci.}, 29(2):317--354, 2019.

\bibitem{BaskinSpenceWunsch2016}
D.~Baskin, E.~A.~Spence, and J.~Wunsch.
\newblock Sharp high-frequency estimates for the Helmholtz equation and applications to boundary integral equations.
\newblock \emph{SIAM J. Math. Anal.}, 48(1):229--267, 2016.

\bibitem{Dorfler1996}
W.~D\"orfler.
\newblock A convergent adaptive algorithm for Poisson's equation.
\newblock \emph{SIAM J. Numer. Anal.}, 33(3):1106--1124, 1996.

\bibitem{HjorungnesGesbert2007}
A.~Hj\o rungnes and D.~Gesbert.
\newblock Complex-valued matrix differentiation: techniques and key results.
\newblock \emph{IEEE Trans. Signal Process.}, 55(6):2740--2746, 2007.

\bibitem{More1978}
J.~J.~Mor\'e.
\newblock The Levenberg--Marquardt algorithm: implementation and theory.
\newblock In G.~A.~Watson (ed.), \emph{Numerical Analysis}, Lecture Notes in Mathematics 630, pp.~105--116. Springer, Berlin, 1978.

\bibitem{HestenesStiefel1952}
M.~R.~Hestenes and E.~Stiefel.
\newblock Methods of conjugate gradients for solving linear systems.
\newblock \emph{J. Res. Natl. Bur. Stand.}, 49(6):409--436, 1952.

\bibitem{SaadSchultz1986}
Y.~Saad and M.~H.~Schultz.
\newblock GMRES: a generalized minimal residual algorithm for solving nonsymmetric linear systems.
\newblock \emph{SIAM J. Sci. Stat. Comput.}, 7(3):856--869, 1986.

\bibitem{VanderVorst1992}
H.~A.~van der Vorst.
\newblock Bi-CGSTAB: a fast and smoothly converging variant of Bi-CG for the solution of nonsymmetric linear systems.
\newblock \emph{SIAM J. Sci. Stat. Comput.}, 13(2):631--644, 1992.

\bibitem{Sobol1967}
I.~M.~Sobol'.
\newblock On the distribution of points in a cube and the approximate evaluation of integrals.
\newblock \emph{USSR Comput. Math. Math. Phys.}, 7(4):86--112, 1967.

\bibitem{Watson1989}
L.~T.~Watson.
\newblock Globally convergent homotopy methods: a tutorial.
\newblock \emph{Appl. Math. Comput.}, 31:369--396, 1989.

\bibitem{MardalWinther2011}
K.-A.~Mardal and R.~Winther.
\newblock Preconditioning discretizations of systems of partial differential equations.
\newblock \emph{Numer. Linear Algebra Appl.}, 18(1):1--40, 2011.
\end{thebibliography}
\end{document}